\documentclass[11pt,a4paper]{article}
\usepackage[T1]{fontenc}
\usepackage[utf8]{inputenc}
\usepackage{lmodern}
\usepackage[margin=1in]{geometry}
\usepackage{amsmath,amssymb,amsthm}
\usepackage{microtype}
\usepackage[hidelinks]{hyperref}
\usepackage[affil-it]{authblk}
\newtheorem{theorem}{Theorem}[section]
\newtheorem{lemma}[theorem]{Lemma}

\theoremstyle{definition}
\newtheorem{definition}[theorem]{Definition}
\theoremstyle{remark}

\numberwithin{equation}{section}

\newcommand{\cH}{\mathcal{H}}

\title{On the distinct maximal-clique sizes in $k$-uniform hypergraphs}
\author[1]{Jiabao Yang\footnote{Email: \texttt{jbyang1215@nju.edu.cn}}}
\author[2]{Lixiang Yu \footnote{Email: \texttt{lixiangyu031862@163.com}}}
\author[3,4]{Leilei Zhang\footnote{Corresponding author. Email: \texttt{mathdzhang@163.com}}}
\affil[1]{\small School of Mathematics, Nanjing University, Nanjing 210093, China}
\affil[2]{\small Department of Mathematics, East China Normal University, Shanghai, 200241, China}
\affil[3]{\small School of Mathematics and Statistics, Central China Normal University, Wuhan 430079, China}
\affil[4]{footnotesize Faculty of Environment and Information Sciences, Yokohama National University, Yokohama 240-8501, Japan}
\date{}
\begin{document}
	\maketitle
	 
 \begin{abstract}
 	Let $g(n,k)$ be the maximum number of distinct sizes of maximal
 	cliques in an $n$-vertex $k$-uniform hypergraph, and let
 	$f(n,k)=n-g(n,k)$. We determine the asymptotic order of $f(n,k)$
 	for every fixed integer $k\ge 3$. Define
 	$L_2(x)=\max\{2,\log_2(\max\{1,x\})\}$, and, for $j\ge 3$,
 	let $L_j(x)$ be the least number of iterations of $L_{j-1}$
 	needed to reach a value at most $16$. We prove that
 	\[
 	f(n,k)=\Theta_k(L_k(n)).
 	\]
 	In particular, $f(n,3)=\Theta(\log^{*}n)$, where $\log^{*}n$
 	denotes the iterated logarithm. We also determine the asymptotic
 	behaviour of the layered-tree threshold $c(n,k)$ arising from
 	Gao's insertion-tree method:
 	\[
 	c(n,k)=\log_2 L_k(n)+O_k(1).
 	\]
 	Consequently, $f(n,k)=\Theta_k\!\left(2^{c(n,k)}\right)$.
 	The upper bound follows from a recursive selector construction
 	combined with an elementary sumset argument. For the lower bound,
 	we use maximality to obtain a compression estimate for leaf
 	children in the insertion tree, and then apply a multilevel
 	counting argument. Both arguments apply to all fixed $k\ge 3$.
 \end{abstract}
    \medskip
	\noindent\textbf{Keywords:} hypergraph; maximal clique; clique size; layered tree
	
	\smallskip
	\noindent\textbf{2020 Mathematics Subject Classification:} 05C65, 05C69, 05C35
	
	\section{Introduction} 	\label{sec:introduction}
	
	We study how many different sizes maximal cliques can have. All hypergraphs in this paper are finite and simple.  Throughout this paper $\log$ will denote logarithm to the base $2$. For an integer $k\ge2$, a $k$-uniform hypergraph is a pair $\cH=(V(\cH),E(\cH))$ with $E(\cH)\subseteq\binom{V(\cH)}{k}$.  A set $X\subseteq V(\cH)$ is \emph{complete} if every $k$-element subset of $X$ belongs to $E(\cH)$. 
	A \emph{clique} is a complete set that is maximal under inclusion. 
	Thus every set with fewer than $k$ vertices is complete. 
	Let $g(n,k)$ be the largest number of distinct clique sizes in an $n$-vertex $k$-uniform hypergraph, and define $f(n,k)=n-g(n,k)$.
	
	We write
	\[
	g(n,k)=\max_{|V(H)|=n}
	\bigl|\{|X|:X\text{ is a maximal clique of }H\}\bigr|,
	\qquad f(n,k)=n-g(n,k).
	\]
	Thus \(f(n,k)\) is the smallest possible number of sizes in
	\(\{1,\ldots,n\}\) that do not occur as maximal-clique sizes.
	
	All logarithms have base \(2\). In all asymptotic statements, \(n\)
	tends to infinity and \(k\) is fixed. A constant with subscript \(k\)
	may depend on \(k\). An interval used in a sumset contains only integers.
	For sets of integers \(A\) and \(B\), their sumset is
	\(A+B=\{a+b:a\in A,\ b\in B\}\).
	
	Fix \(K=16\), and define
	\[
	\begin{aligned}
		L_2(x)&=\max\{2,\log_2(\max\{1,x\})\},\\
		L_j(x)&=\min\{t\in\mathbb Z_{\ge0}:L_{j-1}^{\circ t}(x)\le K\}
		\qquad(j\ge3).
	\end{aligned}
	\]
	Here \(F^{\circ t}\) means \(t\) repeated applications of \(F\), and
	\(F^{\circ0}(x)=x\). The functions are defined for \(x\ge0\), including
	when an iterate equals zero. We call \(L_j\) the rank of level \(j\).
	In particular, \(L_3\) is an iterated logarithm.
	Section~\ref{sec:lemmas} proves the properties of these functions that
	we need.
	
	A \((d,C)\)-layered tree is a rooted tree of depth at most \(d\), with
	an order \(v_0,\ldots,v_{N-1}\) in which the root comes first, every
	parent comes before its children, and \(\deg(v_i)\le2^{C+i}\).
	For nonnegative integers \(d,C\), let \(N_0(d,C)\) be the largest
	number of vertices in such a tree. We prove below that this number
	is finite. Define
	\[
	c(n,k)=\min\{C\in\mathbb Z_{\ge0}:n-C\le N_0(k-1,C)\}.
	\]
	
	\begin{theorem}
		\label{thm:main}
		For every fixed integer \(k\ge3\),
		\[
		f(n,k)=\Theta_k(L_k(n)),\qquad
		c(n,k)=\log_2 L_k(n)+O_k(1).
		\]
		It follows that \(f(n,k)=\Theta_k(2^{c(n,k)})\).
	\end{theorem}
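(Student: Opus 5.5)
The proof splits into three parts: an upper bound on $f$, a lower bound on $f$, and the estimate for $c(n,k)$. The final claim $f=\Theta_k(2^{c(n,k)})$ then follows at once from $f=\Theta_k(L_k(n))$ and $2^{c(n,k)}=\Theta_k(L_k(n))$.

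I would begin with the layered-tree threshold, because it is purely combinatorial. The goal is two-sided bounds on $N_0(d,C)$ of tower type matching $L_{d+1}$. Precisely, I want to show that
\[
\log_2 N_0(d,C)\ \text{behaves like the inverse of}\ L_{d+1}\ \text{evaluated at}\ 2^{C}\pm O_d(1),
\]
meaning that $L_{d+1}(N_0(d,C))=2^{C+O_d(1)}$. I would argue by induction on $d$.

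For $d=1$ the tree is a star. The root has degree at most $2^C$, so $N_0(1,C)=2^C+1$, and $L_2(N_0)\approx C$. This matches once the indexing is fixed so that $c$ relates to $\log L_2$ at the first level. The indexing needs checking carefully, since $L_k$ is paired with depth $k-1$.

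For the inductive step $d-1\to d$:
\begin{itemize}
\item \emph{Lower bound.} The root has $2^C$ children. Each successive subtree can be given a starting budget exponent equal to roughly the number of vertices already placed. Iterating therefore nests $N_0(d-1,\cdot)$ about $2^C$ times, so a single application of rank $L_d$ adds one iteration per child. This gives $L_{d+1}(N_0(d,C))\gtrsim 2^C$.
\item \emph{Upper bound.} Any tree splits into at most $2^C$ root subtrees, and each is a $(d-1,C')$-layered tree whose $C'$ is at most the index of its first vertex. Hence every child can at most apply $N_0(d-1,\cdot)$ once to the current vertex count, which gives $L_{d+1}(N_0(d,C))\le 2^C+O_d(1)$.
\end{itemize}
I would use the properties of $L_j$ from Section~\ref{sec:lemmas}, namely monotonicity and $L_j(F(x))\le L_j(x)+1$ type estimates, to absorb the constants. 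Inverting then gives $c(n,k)=\log_2 L_k(n)+O_k(1)$.

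For the upper bound $f(n,k)=O_k(L_k(n))$, I would follow the recursive selector construction. The idea is to realise clique-size sets as sumsets: if the vertex set is partitioned into blocks and the edges are chosen so that a maximal clique picks an independent choice in each block, then the realised sizes form $A_1+\dots+A_r$. Within a block, a smaller-uniformity gadget (the $(k-1)$-level construction, obtained by linking through one extra vertex) supplies almost all sizes. The elementary sumset fact is that if each $A_i$ is an interval with few missing values, the sumset is an interval missing only $O(\text{number of levels})$ endpoints. I would design the recursion so that each application of the lower-uniformity construction costs $O(1)$ lost sizes, while it shrinks the problem from $n$ to roughly $L_{k-1}(n)$. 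The number of levels is then $L_k(n)$, so the total loss is $O_k(L_k(n))$. Base case $k=2$, graphs, where the loss is $O(\log n)=O(L_2(n))$ by the known results for graphs.

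For the lower bound $f(n,k)=\Omega_k(L_k(n))$, I would use Gao's insertion-tree method. Order the maximal cliques by size and build a tree of depth at most $k-1$, where each clique is attached according to the $(k-1)$-set witnessing the non-extendability of its predecessor. The compression estimate from maximality says that a vertex inserted at position $i$ has at most $2^{C+i}$ leaf children, where $C$ is about $\log$ of the number of missing sizes. This gives a $(k-1,C)$-layered tree with about $n-f$ vertices, so $n-C\le N_0(k-1,C)$ up to constants, hence $C\ge c(n,k)-O(1)$ and $f\gtrsim 2^{c(n,k)}=\Omega_k(L_k(n))$.

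I expect the main obstacle to be this compression estimate: showing rigorously that maximality forces the degree bound $2^{C+i}$ with $C=\log f+O_k(1)$. My first attempt would be to bound distinct leaf children by distinct subsets of previously missing sizes, via an injection into $2^{[\text{missing}]\cup[i]}$.
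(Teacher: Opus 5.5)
Your upper-bound plan for $f$ matches the paper: one selector vertex per block, contribution sets of near-optimal $(k-1)$-uniform hypergraphs, gap filling by a full interval from a tail of size about $L_{k-1}(n)$, and $O(L_k(n))$ levels. The genuine gap is the lower bound $f(n,k)=\Omega_k(L_k(n))$. The per-node estimate it rests on, $\deg(x_i)\le 2^{C+i}$ with $C=\log_2 f+O_k(1)$, is false.

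Here is a counterexample for $k=3$. Take the selector construction with graph components $\mathcal{G}_i=K_{a_i}\sqcup K_1$, where $a_i=2^{i-1}+1$ and $1\le i\le s$.
\begin{itemize}
\item Its maximal cliques are exactly the state choices. The active state $K_{a_i}\cup\{p_i\}$ has the same size as the inactive one.
\item So the sizes are $(n-s)-\sum_{i\in J}2^{i-1}$ for $J\subseteq[s]$. There are $2^s$ of them, with $n=2^s+3s-1$, hence $\phi=3s-1$ missing sizes.
\item For each size, choose the clique in which the components outside $J$ are inactive. Then $D_0=\{p_1,\dots,p_s\}$, and the other $2^s-1$ chosen cliques have pairwise distinct traces $\{p_i:i\in J\}$ on $B_{x_0}=D_0$.
\item Hence the insertion tree is a star whose root has $2^s-1=2^{(\phi+1)/3}-1$ children. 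Your estimate allows only $O(\phi)$.
\end{itemize}
What maximality actually gives is $\deg(x_i)\le 2^{|B_{x_i}|}\le 2^{|D_i|}\le 2^{C+i}$, where $C$ is the number of missing sizes itself, not its logarithm. Your proposed injection into subsets of an $(f+i)$-set gives the same bound. So the insertion tree is only a $(k-1,f)$-layered tree on $n-f$ vertices. Comparing it with $N_0$ yields only $f(n,k)\ge c(n,k)=\log_2 L_k(n)+O_k(1)$, which is exponentially weaker than the theorem.

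The missing idea is an amortized count. The paper's compression lemma concerns only the leaf children of a depth-$(k-2)$ node $u$.
\begin{itemize}
\item Let $E$ contain $D_\lambda$ for a carrier $\lambda$ of every transversal prefix on the path to $u$. Then leaf children with equal traces $P_v\cap E$ have equal $B_v\setminus E$. Hence $\ell(u)\le 2^{|E|}(|B_u|+|E|+1)$.
\item The point is that $E$ can be frozen. Call \emph{resets} the first carriers of transversal $t$-tuples; at level $1$ there are at most $|D_0|\le C$ of them. Inside a quiet segment (a stretch with no new resets) starting at virtual count $T$, one may take $E=E_T$, the union of the first $T$ complements, of size $O(T^2)$. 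Here the virtual count processes a deepest structural node together with all its leaf children.
\item Each deepest node in such a segment therefore multiplies the count by $2^{O(T^2)}$ instead of exponentiating it. There are at most $2^{O(T)}$ such nodes, so $L_3$ rises by $O(1)$.
\item Counting level-$(t+1)$ resets inside level-$t$ segments and aggregating ranks raises the rank index by one per level. In the end, each of the at most $C+1$ level-$1$ segments raises $L_k$ by $O_k(1)$, giving $L_k(n)=O_k(C)$.
\end{itemize}
In the paper, $f=\Theta_k(2^{c(n,k)})$ then follows from this together with the separate estimate for $c$; it is not read off the tree.

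Two smaller points about your treatment of $N_0$. First, the relation $L_{d+1}(N_0(d,C))=2^{C+O(1)}$ fails at $d=1$, since $L_2(2^C+1)\approx C$. The induction must therefore start at $d=2$ with a direct computation. Second, your upper bound assumes each root subtree occupies a consecutive block of indices. That needs Gao's exchange argument, which shows an extremal tree may be taken in DFS order; without it even the finiteness of $N_0$ is unclear.
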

	
	The insertion tree and the depth-first-search argument used here are
	due to Gao~\cite{Gao}. Yang and Zhang~\cite{YangZhang} proved
	\(f(n,3)=\Theta(\log^* n)\) and proposed the relation with
	\(2^{c(n,k)}\). We include proofs of all the tree facts needed here.
	Our upper and lower bounds both cover \(k=3\).
	
	Section~\ref{sec:lemmas} gives the main lemmas and their proofs.
	We first study the rank functions and layered trees, then give the
	construction for the upper bound and the tree lemmas for the lower bound.
	Section~\ref{sec:proof} combines these results to prove
	Theorem~\ref{thm:main}.
	
	\section{Main lemmas}
	\label{sec:lemmas}
	
	We begin with the rank functions defined in the introduction.
	The next lemma shows that they are well defined and change by only a
	bounded amount under the operations used later.
	
	\begin{lemma}[Properties of the rank functions]
		\label{lem:ranks}
		For every \(j\ge3\), the function \(L_j\) is finite, nondecreasing and
		unbounded, and \(0\le L_j(x)\le L_2(x)\). In particular,
		\(L_j(x)=o(x)\) as \(x\to\infty\). For \(x>K\),
		\begin{equation}
			L_j(x)=1+L_j(L_{j-1}(x)).
			\label{eq:rank-recurrence}
		\end{equation}
		For fixed positive constants \(a,A\),
		\[
		L_j(ax+A)=L_j(x)+O(1),\qquad
		L_j(2^{Ax^a})=L_j(x)+O(1).
		\]
		A fixed positive power, or a fixed number of exponential operations
		applied to such a power, also changes \(L_j\) by only a bounded amount.
	\end{lemma}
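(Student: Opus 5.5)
The plan is to argue by induction on \(j\ge3\), carrying the whole list of properties at each level. Two features of \(L_2\) drive everything. First, \(L_2\) is nondecreasing. Second, \(L_2(x)<x\) for \(x>K\); indeed \(\log_2 x\le x/4\) for \(x\ge16\). Also \(L_2\ge2\) everywhere.

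\textbf{Base step, \(j=3\).}
For \(x>K\), each iterate either drops to at most \(K\) or stays above \(K\). While it stays above \(K\), applying \(\log_2\) strictly decreases it, and starting from \(x\le 2^{2^{\cdot^{\cdot}}}\) a tower of height \(h\) reaches \(\le K\) after about \(h\) steps. So \(L_3(x)\) is finite.

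Monotonicity follows by comparing iterates: if \(x\le y\) then \(L_2^{\circ t}(x)\le L_2^{\circ t}(y)\) for all \(t\), since \(L_2\) is monotone. Unboundedness follows because \(x\) large enough (a tower of height \(t\) above \(K\)) forces \(L_3(x)\ge t\).

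For the general step I will use the facts inductively. Suppose \(L_{j-1}\) is finite, nondecreasing and unbounded, satisfies \(L_{j-1}\le L_2\), and satisfies \(L_{j-1}(x)<x\) for \(x>K\). The last property comes from \(L_{j-1}\le L_2<x\). The same monotone-iteration argument then gives finiteness and monotonicity of \(L_j\). Finiteness needs a little care: since \(L_{j-1}(x)\le L_2(x)\), the iterates of \(L_{j-1}\) are dominated by the iterates of \(L_2\), again by monotonicity. Hence \(L_j(x)\le L_3(x)\). It also yields \(L_j\le L_2\). By induction \(L_3(x)\le L_2(x)\) holds trivially, because the number of steps to go from \(x\) to \(\le16\) by \(\log_2\) is at most \(\log_2 x\) for \(x>K\), and is \(0\) otherwise. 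Unboundedness of \(L_j\) follows from unboundedness of \(L_{j-1}\): pick \(x_t\) with \(L_{j-1}(x_t)>x_{t-1}\). The bound \(L_j=o(x)\) is immediate from \(L_j\le L_2=O(\log x)\).

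\textbf{The recurrence.}
The recurrence \eqref{eq:rank-recurrence} is immediate from the definition. For \(x>K\) the \(t=0\) iterate fails the condition \(\le K\), and the remaining iterates are those of \(L_{j-1}(x)\) shifted by one.

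\textbf{Robustness, the main obstacle.}
The key device is that the operations \(x\mapsto ax+A\) and \(x\mapsto 2^{Ax^a}\) are \emph{absorbed} after a bounded number of applications of \(L_{j-1}\). I would prove by induction on \(j\) the statement
\[
\bigl|L_j(\phi(x))-L_j(x)\bigr|=O(1)
\]
for \(\phi\) in the class generated by these operations and their inverses \(x\mapsto (\log_2 x/A)^{1/a}\).

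At \(j=2\) the inequality is not literally bounded for \(\phi(x)=2^{Ax^a}\). Instead we have \(L_2(2^{Ax^a})=Ax^a\), which is an affine-power function of \(x\). So the correct inductive statement is the following: for each \(\phi\) in the class \(\mathcal{P}=\{\,x\mapsto ax^b+A\,\}\) composed with boundedly many exponentials, there is \(\psi\) in \(\mathcal{P}\) (with boundedly fewer exponentials) such that, for large \(x\),
\[
L_{j-1}(\phi(x))\le \psi(L_{j-1}(x))\qquad\text{and}\qquad \psi(L_{j-1}(x))\le L_{j-1}(\phi(\phi'(x)))
\]
for a suitable \(\phi'\). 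For \(j-1=2\) this holds with \(\psi\) of the same or lower exponential height. Iterating with the recurrence, each application of \(L_{j-1}\) strips one exponential or shrinks a power to a bounded-factor distortion. Eventually the distortion is \(x\mapsto ax^b+A\) applied to an argument that is \(O(1)\), so it costs \(O(1)\) extra steps.

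Monotonicity of \(L_j\) lets us sandwich \(\phi(x)\) between \(x\) and a fixed tower. The delicate points are bookkeeping the constants so that \(16\) is eventually reached with only boundedly many extra iterations, and handling \(a<1\) (lower bounds) symmetrically. I expect this robustness step, especially for \(j\ge4\) where the distortion must be pushed through \(L_{j-1}\) rather than \(\log_2\), to be the main obstacle. The first thing I would try is to prove the cleaner statement \(L_j(L_2^{\circ m}(x))=L_j(x)+O_m(1)\) for \(j\ge3\), together with \(L_j(2^{2^{x}})\le L_j(x)+O(1)\), and then reduce all cases to these.
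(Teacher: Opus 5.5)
Your arguments for finiteness, monotonicity, unboundedness, $0\le L_j\le L_2$, $L_j(x)=o(x)$ and the recurrence are sound. They are essentially the paper's: the iterates of $L_{j-1}$ are dominated by those of the monotone map $L_2$.

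The gap is the stability half of the lemma, which is the part used repeatedly later: you do not prove it. The scheme you sketch is not well defined (the class with ``boundedly fewer exponentials'', the auxiliary $\phi'$). Its mechanism also fails at the first level. Comparing $L_{j-1}(\phi(x))$ with $\psi(L_{j-1}(x))$ for $j-1=2$ and $\phi(x)=2^{Ax^a}$ forces $\psi(y)\ge A2^{ay}$, since $L_2(\phi(x))=Ax^a=A2^{a\log_2x}$. So $\psi$ has the same exponential height as $\phi$ and nothing is stripped. The difficulty you anticipate for $j\ge4$ does not arise, because only the upper bound $L_{j-1}\le L_2$ is ever needed.

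The missing idea is to compare $L_{j-1}(\phi(x))$ with $x$ itself, not with a function of $L_{j-1}(x)$. This works because you have already shown $L_{j-1}\le L_2$. Put $F=L_{j-1}$. Directly from the definition of $L_j$ as a minimum, $L_j(y)\le m+L_j(F^{\circ m}(y))$ for every $y$ and every $m\ge0$.

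\emph{Affine maps.} For large $x$ we have $F(ax+A)\le\log_2(ax+A)\le x$ and $F(x)\le\log_2x\le ax+A$. Monotonicity then gives $L_j(ax+A)\le1+L_j(x)$ and $L_j(x)\le1+L_j(ax+A)$.

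\emph{Exponentials.} For $y=2^{Ax^a}$ we have $y\ge x$ and $F^{\circ2}(y)\le L_2^{\circ2}(y)=\log_2(Ax^a)\le x$. Hence $0\le L_j(y)-L_j(x)\le2$. Powers, and a bounded number of further exponentials, are handled the same way.

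In particular, the ``cleaner statement'' you planned to try first is a two-line consequence:
\[
L_j(2^{2^x})\le2+L_j(L_2^{\circ2}(2^{2^x}))=2+L_j(x).
\]
Together with monotonicity, this yields every stability claim. Each distortion $\phi$ in the lemma satisfies $\phi(x)\le E^{\circ m}(x)$ and $x\le E^{\circ m}(\phi(x))$, where $E(x)=2^{2^x}$, for a fixed $m$ and all large $x$. No induction on $j$ and no bookkeeping of distortion classes is required.
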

	
	\begin{proof}
		For \(y>K\), we have \(L_2(y)=\log_2 y\le y-1\). Thus the iteration
		defining \(L_3\) stops, and \(L_3(y)\le\lceil y-K\rceil\) for \(y>K\).
		If \(x>K\) and \(z=L_2(x)\), then either \(z\le K\), in which case
		\(L_3(x)=1\le z\), or
		\[
		L_3(x)=1+L_3(z)\le1+\lceil z-K\rceil\le z.
		\]
		For \(x\le K\), the inequality \(L_3(x)\le L_2(x)\) is immediate.
		Inductively, if \(L_{j-1}\le L_2\), its iterates reach \([0,K]\) no later
		than the corresponding iterates of \(L_2\). Hence \(L_j\le L_3\le L_2\).
		This proves finiteness and the upper bound for all ranks.
		Monotonicity follows from monotonicity of the function being iterated.
		If that function is unbounded, each of its fixed finite compositions is
		unbounded. Its stopping time is therefore unbounded as well. Starting
		with \(L_2\), this proves unboundedness inductively. The recurrence follows
		directly from the definition.
		
		Write \(F=L_{j-1}\). For sufficiently large \(x\),
		\[
		F(ax+A)\le L_2(ax+A)\le x,\qquad
		F(x)\le L_2(x)\le ax+A.
		\]
		Applying the recurrence and monotonicity in both directions gives
		\(\lvert L_j(ax+A)-L_j(x)\rvert\le1\).
		For \(y=2^{Ax^a}\), eventually \(y\ge x\) and
		\[
		F^{\circ2}(y)\le L_2^{\circ2}(y)=\log_2(Ax^a)\le x.
		\]
		Thus \(0\le L_j(y)-L_j(x)\le2\). Similarly, \(F(x^a)\le x\) and
		\(F(x)\le x^a\) for large \(x\), which proves stability under positive
		powers. Applying the exponential estimate a fixed number of times proves
		the last assertion.
	\end{proof}
	
	\begin{lemma}[Combining rank bounds]
		\label{lem:combine-ranks}
		Fix \(m\ge3\) and constants \(A,K_0>0\). Suppose
		\[
		T=U_0\le U_1\le\cdots\le U_N,\qquad N\le2^{AT},
		\]
		and
		\[
		L_m(U_{i+1})\le L_m(U_i)+K_0\qquad(0\le i<N).
		\]
		For all sufficiently large \(T\),
		\[
		L_{m+1}(U_N)\le L_{m+1}(T)+O_{m,A,K_0}(1).
		\]
	\end{lemma}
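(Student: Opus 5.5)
The idea is that telescoping the hypothesis bounds $L_m(U_N)$ by $L_m(T)+K_0N$. We then use the recurrence \eqref{eq:rank-recurrence} at level $m+1$ twice, together with the stability estimates of Lemma~\ref{lem:ranks}, to absorb the factor $2^{AT}$.

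\emph{Step 1.} Summing the inequalities $L_m(U_{i+1})\le L_m(U_i)+K_0$ over $0\le i<N$ gives
\[
L_m(U_N)\le L_m(T)+K_0N\le L_m(T)+K_0 2^{AT}.
\]
By Lemma~\ref{lem:ranks}, $L_m(T)\le L_2(T)\le T$ for large $T$. Hence
\[
L_m(U_N)\le T+K_0 2^{AT}\le 2^{A'T}
\]
for a suitable constant $A'=A'(A,K_0)$ and all large $T$.

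\emph{Step 2.} If $U_N\le K$, there is nothing to prove, because $L_{m+1}(U_N)=0$. Otherwise the recurrence \eqref{eq:rank-recurrence} with $j=m+1$ gives
\[
L_{m+1}(U_N)=1+L_{m+1}\bigl(L_m(U_N)\bigr).
\]
Since $L_{m+1}$ is nondecreasing, Step 1 yields
\[
L_{m+1}(U_N)\le 1+L_{m+1}\bigl(2^{A'T}\bigr).
\]
Lemma~\ref{lem:ranks} applied at level $j=m+1$ with exponent $a=1$ gives $L_{m+1}(2^{A'T})=L_{m+1}(T)+O(1)$. Therefore
\[
L_{m+1}(U_N)\le L_{m+1}(T)+O_{m,A,K_0}(1).
\]
The implied constant depends only on $m$, $A'$ and the threshold beyond which the estimates of Lemma~\ref{lem:ranks} hold, hence only on $m$, $A$ and $K_0$.

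\emph{Main obstacle.} The only delicate point is Step 1's absorption of the additive term $L_m(T)$ into a single exponential. This is handled because Lemma~\ref{lem:ranks} gives the crude bound $L_m\le L_2$, which is at most linear. The hypothesis $U_0\le\cdots\le U_N$ is used only through monotonicity of $L_m$, to make the telescoped bound meaningful.
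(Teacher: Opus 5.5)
Your proof is correct and is essentially the paper's argument: you telescope and use $L_m(T)\le L_2(T)$ to get $L_m(U_N)\le 2^{A'T}$, then apply the recurrence \eqref{eq:rank-recurrence} at level $m+1$, monotonicity, and the exponential stability from Lemma~\ref{lem:ranks}. One small correction to your closing remark: the telescoped bound does not need the ordering $U_0\le\cdots\le U_N$; that ordering only guarantees $U_N\ge T>K$, so that the recurrence applies, and your separate case $U_N\le K$ already covers this.
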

	
	\begin{proof}
		Summing the hypotheses and using \(L_m(T)\le L_2(T)\) gives
		\[
		L_m(U_N)\le L_m(T)+K_0N\le2^{A'T}
		\]
		for a suitable fixed \(A'>0\). By the recurrence, monotonicity and stability,
		\[
		\begin{aligned}
			L_{m+1}(U_N)
			&=1+L_{m+1}(L_m(U_N))\\
			&\le1+L_{m+1}(2^{A'T})
			=L_{m+1}(T)+O(1).
		\end{aligned}
		\]
	\end{proof}
	
	We next estimate the size of layered trees. We first restate the
	definition so that the ordering condition is explicit.
	
	\begin{definition}
		\label{def:layered-tree}
		For nonnegative integers \(d,C\), a \((d,C)\)-layered tree is a rooted tree
		with an ordering \(v_0,v_1,\ldots,v_{N-1}\) such that \(v_0\) is the root,
		every vertex has depth at most \(d\), the parent of \(v_i\) precedes \(v_i\)
		for \(i>0\), and
		\[
		\deg_T(v_i)\le2^{C+i}\qquad(0\le i<N).
		\]
	\end{definition}
	
	The parent-before-child condition agrees with Gao's requirement that each
	vertex other than the root have a preceding neighbor: in a tree, the earlier
	vertices then form a connected subtree containing the root. A depth-first
	search order, abbreviated to DFS order, visits each rooted branch in one
	consecutive block.
	
	For integers \(D,h\ge0\), define \(B_0(D)=1\). For \(h\ge1\), put
	\begin{equation}
		\begin{gathered}
			x_0=1,\qquad x_{s+1}=x_s+B_{h-1}(D+x_s)
			\quad(0\le s<2^D-1),\\
			B_h(D)=x_{2^D-1}.
		\end{gathered}
		\label{eq:branch-recursion}
	\end{equation}
	Here \(B_h(D)\) is the largest order of a DFS branch of height at most
	\(h\) whose root has an additional edge to a parent outside the branch,
	and whose vertex of local index \(i\) has total degree at most \(2^{D+i}\).
	Indeed, its root has at most \(2^D-1\) children. A child branch starting
	after \(x_s\) vertices has parameter \(D+x_s\). Induction shows that each
	\(B_h\) is nondecreasing in \(D\). Filling a child branch to its maximum
	increases the available indices for all later branches, so the displayed
	recursion gives both a valid construction and an upper bound. In particular,
	\begin{equation}
		B_1(D)=2^D.
		\label{eq:height-one}
	\end{equation}
	
	\begin{lemma}[Finite size and DFS order]
		\label{lem:finite-dfs}
		Every \((d,C)\)-layered tree has order at most \(B_d(C+1)\). Thus a
		largest such tree exists. Every ordering of a largest such tree that
		satisfies Definition~\ref{def:layered-tree} is a DFS order.
	\end{lemma}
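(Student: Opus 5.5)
The plan is to prove a slightly more general \emph{branch} statement by induction on the height \(h\), and then read off all three assertions of Lemma~\ref{lem:finite-dfs} from it. Consider a rooted tree \(S\) of height at most \(h\) whose root carries one extra (virtual) edge to a parent outside \(S\). Suppose \(S\) has a parent-before-child ordering \(u_0,u_1,\ldots\) such that the total degree of \(u_i\), counting the virtual edge at \(u_0\), is at most \(2^{D+i}\). The claim is that \(|S|\le B_h(D)\).

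To reduce Lemma~\ref{lem:finite-dfs} to this, take a \((d,C)\)-layered tree with ordering \(v_0,\ldots,v_{N-1}\) and attach a virtual parent edge to \(v_0\). With \(D=C+1\), the new degree of \(v_i\) is at most \(2^{C+i}+1\le 2^{C+1+i}\). So \(N\le B_d(C+1)\), and a largest tree exists because there are finitely many candidates.

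For the branch statement I induct on \(h\). The case \(h=0\) is trivial. For \(h\ge1\), the root has at most \(2^D-1\) children \(c_1,\ldots,c_m\), listed in order of appearance, with child branches \(S_1,\ldots,S_m\). Put \(y_s=1+|S_1|+\cdots+|S_s|\); the goal is \(y_s\le x_s\) for all \(s\), where \(x_s\) is defined in \eqref{eq:branch-recursion}.

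If the order is DFS, the step is straightforward. The branch \(S_{s+1}\) starts at local position \(y_s\), so the restricted order on it satisfies the hypothesis with parameter \(D+y_s\). By induction, \(|S_{s+1}|\le B_{h-1}(D+y_s)\). Since \(B_{h-1}\) is nondecreasing in \(D\), \(y_s\le x_s\) gives \(y_{s+1}\le x_s+B_{h-1}(D+x_s)=x_{s+1}\).

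The main obstacle is orders that interleave the branches. Restricting the global order to \(S_{s+1}\) only shows that its vertices sit at positions at least their local index. That gives a \emph{looser} bound, whereas \(S_{s+1}\) needs parameter \(D+y_s\), not merely \(D+1+s\). My first attempt is an exchange argument converting any valid order into a DFS order without violating any degree bound.
\begin{itemize}
\item Let \(S_j\) be the branch with the smallest index whose vertices are not contiguous. Move the remaining vertices of \(S_j\) forward so that they immediately follow the block already occupied by \(S_j\), keeping their relative order, and shift the intervening vertices later.
\item Shifted vertices only move later, so their bounds only weaken.
\item Vertices of \(S_j\) move earlier, so their bounds tighten. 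This is the delicate point.
\end{itemize}
To handle it, I would apply the induction on \(h\) to \(S_j\) itself and argue by sizes: only the count of vertices of \(S_j\) matters. I would then replace \(S_j\) by a DFS-ordered branch of the same size, placed as early as possible, which is legitimate because \(B_{h-1}\) bounds it regardless. If this direct exchange fails, the fallback is to induct on the number of vertices. Remove the last vertex \(u_{N-1}\); it is a leaf, and removing it keeps the order valid. Then show the maximum over orders is attained by the DFS recursion, using that moving an entire branch later never hurts the vertices outside it.

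Finally, for a largest tree, suppose a valid ordering is not DFS. Then the exchange above yields a valid DFS ordering with strict slack somewhere. Either some vertex moved strictly later, or some branch starts later than it needs to. That slack lets me attach one more leaf at the end of the order, since the last index has the largest degree allowance \(2^{C+N}\), without breaking depth \(\le d\). This contradicts maximality, so every valid ordering of a largest tree is a DFS order.
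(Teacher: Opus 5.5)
Your reduction to a branch statement and your treatment of DFS orders are correct and match the first step of the paper's proof. The gap is the interleaved case, which is the real content of the lemma. Moving the rest of $S_j$ forward shrinks the allowances of its vertices. Your repair is to replace $S_j$ by a DFS branch of the same size starting at $y_{j-1}$, ``because $B_{h-1}$ bounds it regardless,'' and this does not work. The induction hypothesis bounds $|S_j|$ only by $B_{h-1}(D')$ with $D'\ge D+p_\ell-\ell$ for every vertex of $S_j$ of local index $\ell$ and global position $p_\ell$. Interleaving is exactly what makes $p_\ell-\ell$ large.

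For a concrete case, take $h=3$ and $D\ge2$, and use the order $r,c_1,c_2,\ldots$, where $c_2$ is a leaf and the rest of $S_1$ follows in DFS order. Every vertex of $S_1$ other than $c_1$ then sits one place later than in a DFS order, so its allowance doubles. Filling $S_1$ maximally gives $|S_1|>B_2(D+1)$, hence $y_1>x_1$. So the invariant $y_s\le x_s$ you aim for is false for interleaved orders, and such an $S_1$ can be neither moved forward nor replaced. Only the \emph{total} is at most $B_h(D)$: the gain of $S_j$ is paid for by later branches, and your argument never captures this trade-off.

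The fallback is not a proof either. Deleting the last leaf and inducting on the number of vertices does not turn a bound for $N-1$ vertices into one for $N$. The phrase ``show the maximum over orders is attained by the DFS recursion'' just restates the goal. Your last paragraph inherits the gap. In addition, the slack must sit at a vertex of depth less than $d$, so that a leaf can be attached to it; the allowance $2^{C+N}$ of the new leaf is irrelevant. You do not establish that such a vertex exists.

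The missing idea is Gao's exchange, which the paper uses; it modifies the tree rather than only reordering it. Take the longest DFS-compatible prefix $v_0,\ldots,v_{j-1}$, and let $v_i$ be the deepest vertex on the DFS stack with an unvisited child $v_{i'}$. The parent of $v_j$ is then a strict ancestor of $v_i$, so $\operatorname{depth}(v_j)\le\operatorname{depth}(v_i)$, and $i'>j$. Insert a new leaf $w$, a child of $v_i$, immediately before $v_j$, and reattach the subtree of $v_{i'}$ to $v_j$.

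This exchange keeps the tree admissible:
\begin{itemize}
\item the degree of $v_i$ is unchanged;
\item $v_j$ gains a child, but its index rises by one, which doubles its allowance;
\item all other indices only increase, and no depth increases.
\end{itemize}
The tree gains a vertex, and its DFS-compatible prefix strictly lengthens. Each such prefix is a DFS-ordered layered tree, so by your DFS case it has at most $B_d(C+1)$ vertices. Hence repeated exchanges stop at a DFS-ordered tree at least as large as the original. Finally, a single exchange applied to a non-DFS ordering of a largest tree contradicts maximality, which gives the third assertion directly.
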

	
	\begin{proof}
		First suppose that the whole order is DFS. Regard the root as having one
		additional edge to an external parent and increase the parameter from
		\(C\) to \(C+1\). Its degree is then at most \(2^C+1\le2^{C+1}\), and all
		other degree allowances also increase. The branch interpretation of \eqref{eq:branch-recursion}
		bounds its order by \(M=B_d(C+1)\).
		
		For an arbitrary admissible ordered tree, consider its longest initial
		segment compatible with a DFS of that tree. Suppose the next vertex is
		\(v_j\), so the compatible segment has \(j\) vertices. Let \(v_i\) be the
		deepest vertex on the current DFS stack that still has an unvisited child.
		Since \(v_j\) is the first failure of DFS, its parent is a strict ancestor
		of \(v_i\). There is an unvisited child \(v_{i'}\) of \(v_i\) with \(i'>j\),
		and
		\[
		\operatorname{depth}(v_i)\ge\operatorname{depth}(v_j).
		\]
		Insert a new vertex \(w\) immediately before \(v_j\) in the order, add
		\(v_iw\), delete \(v_iv_{i'}\), and add \(v_jv_{i'}\). This is the exchange
		used in Gao's DFS lemma~\cite[Lemma~3.1]{Gao}.
		
		The resulting graph is a tree: \(v_j\) lies outside the subtree at
		\(v_{i'}\), since parents precede children and \(j<i'\). The reattached
		subtree does not become deeper, and the new leaf \(w\) has allowed
		depth because \(v_i\) previously had the child \(v_{i'}\). The degree of
		\(v_i\) is unchanged. The degree of \(v_j\) increases by one, but its
		insertion index increases by one, so its allowance doubles. Every other
		old vertex either keeps its index or receives a larger one. The new leaf
		also satisfies its degree bound. Parent-before-child order is preserved.
		Thus the new tree is admissible and has one more vertex. Its old
		compatible prefix followed by \(w\) is compatible with DFS, so the longest
		compatible prefix has strictly increased.
		
		Every compatible prefix, with the induced tree, is itself a tree in DFS
		order. The first paragraph therefore bounds its length by \(M\).
		Repeating the exchange must stop: otherwise these prefix lengths
		would eventually exceed \(M\). At termination the whole order is DFS, so
		the resulting tree has at most \(M\) vertices. The original tree has no
		more vertices than this tree, proving the bound. The set of possible
		orders is therefore a nonempty bounded set of integers and has a maximum.
		A non-DFS ordering of a largest tree would admit one further exchange and
		contradict maximality.
	\end{proof}
	
	Let \(N_0(d,C)\) denote the maximum order in Definition~\ref{def:layered-tree}. The recursive
	branch with parameter \(C\) is also a valid whole tree: its root has at
	most \(2^C-1\) children and its other degree bounds agree with those in the
	definition. Together with Lemma~\ref{lem:finite-dfs}, this gives
	\begin{equation}
		B_d(C)\le N_0(d,C)\le B_d(C+1).
		\label{eq:tree-sandwich}
	\end{equation}
	
	\begin{lemma}[Rank of a branch size]
		\label{lem:branch-rank}
		For every fixed \(h\ge2\), as the integer \(D\) tends to infinity,
		\[
		L_{h+1}(B_h(D))=\Theta_h(2^D).
		\]
	\end{lemma}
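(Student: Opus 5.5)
Induction on \(h\), with the recursion \eqref{eq:branch-recursion} as the main tool. We track how \(L_{h+1}\) and \(L_h\) change as the recursion for \(B_h(D)\) runs through its \(2^D-1\) steps.

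\emph{Base case \(h=2\).} By \eqref{eq:height-one}, \(x_{s+1}=x_s+2^{D+x_s}\). Hence \(x_{s+1}\ge 2^{x_s}\), and also \(x_{s+1}\le 2^{D+x_s+1}\).
\begin{itemize}
\item Lower bound: from \(x_{s+1}\ge 2^{x_s}\), \(B_2(D)\) is at least a tower of \(2\)'s of height roughly \(2^D\). Each application of \(L_2=\log_2\) lowers the height by one, so \(L_3(B_2(D))\ge 2^D-O(1)\).
\item Upper bound: once \(x_s\ge D\), we have \(x_{s+1}\le 2^{2x_s+1}\), so \(L_2(x_{s+1})\le 2x_s+1\). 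By stability (Lemma~\ref{lem:ranks}, \(L_3(ax+A)=L_3(x)+O(1)\)) and the recurrence \eqref{eq:rank-recurrence}, \(L_3(x_{s+1})\le L_3(x_s)+O(1)\). Summing over \(2^D\) steps gives \(L_3(B_2(D))=O(2^D)\).
\end{itemize}
Lemma~\ref{lem:combine-ranks} is not suitable here, because it bounds \(L_{m+1}\) rather than \(L_m\). The direct per-step argument is what is needed.

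\emph{Inductive step: the per-step increment.} Assume \(L_{h}(B_{h-1}(D'))=\Theta(2^{D'})\) for large \(D'\). Consider
\[
x_{s+1}=x_s+B_{h-1}(D+x_s).
\]
By the inductive hypothesis and monotonicity, \(L_h(x_{s+1})=\Theta(2^{D+x_s})\). The aim is to show that \(L_{h+1}\) increases by a bounded amount at each step, with matching lower and upper estimates.

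\emph{Lower bound.} This holds once \(x_s\) exceeds a constant threshold depending on \(D\) and \(h\). In that regime \(L_h(x_{s+1})\ge c\,2^{D+x_s}\ge x_s\). By the recurrence,
\[
L_{h+1}(x_{s+1})\ge 1+L_{h+1}(x_s)-O(1).
\]
This alone does not show a gain of one per step. So instead I would compare with \(L_h(x_{s+1})\ge 2^{x_s}\). Then \(x_{s+1}\) dominates the sequence \(y_{s+1}=L_h^{-1}(2^{y_s})\), and applying \(L_h\) followed by \(\log_2\) recovers \(y_s\). Since \(L_h\circ\log\) is roughly \(L_h\) up to \(O(1)\) by stability, the true gain per step is \(\ge1\) after grouping a bounded number of steps. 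Counting \(2^D\) steps gives \(L_{h+1}(B_h(D))\ge c\,2^D\).

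\emph{Upper bound.} The estimate \(L_h(x_{s+1})\le C\,2^{D+x_s}\le 2^{2x_s+O(1)}\) holds for \(x_s\ge D\). By the recurrence and the exponential stability in Lemma~\ref{lem:ranks},
\[
L_{h+1}(x_{s+1})\le 1+L_{h+1}(2^{2x_s+O(1)})=L_{h+1}(x_s)+O(1).
\]
Summing over the \(2^D-1\) steps, together with the initial steps where \(x_s<D\) (there are \(O(\log^* D)\) of them, each contributing \(O(1)\)), yields \(O(2^D)\).

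\emph{Main obstacle.} The hardest part is the lower bound: making sure each step genuinely gains about one unit of \(L_{h+1}\), rather than possibly zero. My first attempt would be an explicit comparison. Let \(T_s\) be the sequence with \(T_{s+1}=\) the least \(y\) with \(L_h(y)\ge 2^{T_s}\). By the definition of \(L_{h+1}\) as a stopping time, \(L_{h+1}(T_{s+1})\ge L_{h+1}(T_s)+1\) up to the \(\log_2\) discrepancy, and this discrepancy is absorbed by applying stability once per \(O(1)\) steps.
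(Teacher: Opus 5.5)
Your base case, and your upper bound in the inductive step, are the paper's argument. The upper bound goes from $L_h(x_{s+1})\le 2^{2x_s+O(1)}$, via the recurrence and exponential stability, to $L_{h+1}(x_{s+1})\le L_{h+1}(x_s)+O_h(1)$, summed over the $2^D-1$ updates.

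The gap is in the inductive lower bound. You obtain $L_h(x_{s+1})\ge c\,2^{D+x_s}\ge x_s$, but record only $L_{h+1}(x_{s+1})\ge 1+L_{h+1}(x_s)-O(1)$ and declare this insufficient. You then propose to absorb the loss by ``grouping a bounded number of steps'' or by ``applying stability once per $O(1)$ steps''. That mechanism does not work. A loss of up to $c$ at every step yields a gain of only at least $M(1-c)$ over $M$ steps, which is vacuous when $c\ge1$. Stability can only add further $\pm O(1)$ errors; it never recovers lost ones. So neither the comparison-sequence paragraph nor the ``Main obstacle'' paragraph actually proves $L_{h+1}(B_h(D))=\Omega_h(2^D)$.

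The missing step is plain monotonicity, with no stability needed. For $s\ge1$ (so that $x_{s+1}>K$), the recurrence \eqref{eq:rank-recurrence} and $L_h(x_{s+1})\ge x_s$ give
\[
L_{h+1}(x_{s+1})=1+L_{h+1}\bigl(L_h(x_{s+1})\bigr)\ge 1+L_{h+1}(x_s),
\]
with no error term. Summing over $1\le s\le 2^D-2$ gives $L_{h+1}(B_h(D))\ge 2^D-2$. This is what the paper does, and it is the same monotone tower argument you already used for $h=2$; the $-O(1)$ was never there.

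There is also a smaller inaccuracy in your upper bound. Only $s=0$ has $x_s<D$, because $x_1=1+B_{h-1}(D+1)\ge 2^{D+1}$. That first update contributes $L_{h+1}(x_1)=L_{h+1}(D)+O_h(1)$, not $O(1)$. This is still $o(2^D)$, so your conclusion stands, but the description ``$O(\log^* D)$ initial steps, each contributing $O(1)$'' is wrong.
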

	
	\begin{proof}
		For \(h=2\), equations \eqref{eq:branch-recursion} and \eqref{eq:height-one} give
		\[
		x_{s+1}=x_s+2^{D+x_s},\qquad x_1=1+2^{D+1}.
		\]
		For \(s\ge1\) and large \(D\), we have
		\[
		2^{x_s}\le x_{s+1}\le2^{2x_s}.
		\]
		The recurrence for \(L_3\) gives \(L_3(2^{x_s})=1+L_3(x_s)\), and rank
		stability gives the corresponding upper bound. Thus every update after
		the first increases \(L_3\) by at least one and at most a fixed constant.
		The first update satisfies
		\(L_3(x_1)=L_3(D)+O(1)=O(2^D)\).
		There are \(2^D-1\) updates, proving the base case.
		
		Now let \(h\ge3\) and assume the claim at \(h-1\). The recursion and
		\(B_{h-1}(y)\ge2^y\) imply
		\[
		B_{h-1}(D+x_s)\le x_{s+1}\le2B_{h-1}(D+x_s).
		\]
		By the induction hypothesis and rank stability,
		\begin{equation}
			L_h(x_{s+1})=\Theta_h(2^{D+x_s}).
			\label{eq:branch-update}
		\end{equation}
		The constants in this estimate are uniform over all update indices \(s\),
		since every argument \(D+x_s\) is at least \(D+1\).
		For \(s\ge1\), we have \(x_s\ge x_1\ge2^{D+1}\). Thus, for a fixed
		constant \(A_h>0\) and all large \(D\),
		\[
		x_s\le L_h(x_{s+1})\le2^{A_hx_s}.
		\]
		Using the rank recurrence and stability, we obtain
		\[
		1+L_{h+1}(x_s)
		\le L_{h+1}(x_{s+1})
		\le L_{h+1}(x_s)+O_h(1).
		\]
		For the first update, \eqref{eq:branch-update} gives \(L_h(x_1)=\Theta_h(2^D)\), so
		\[
		\begin{aligned}
			L_{h+1}(x_1)
			&=1+L_{h+1}(L_h(x_1))\\
			&=L_{h+1}(D)+O_h(1)=O_h(2^D).
		\end{aligned}
		\]
		Summing over the \(2^D-1\) updates proves the claim.
	\end{proof}
	
	\begin{lemma}[Size of a layered tree]
		\label{lem:tree-size}
		For every fixed \(d\ge2\),
		\[
		L_{d+1}(N_0(d,C))=\Theta_d(2^C)\qquad(C\to\infty).
		\]
	\end{lemma}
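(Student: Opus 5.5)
The plan is to sandwich \(N_0(d,C)\) between two branch sizes and then read off the rank from Lemma~\ref{lem:branch-rank}. By \eqref{eq:tree-sandwich},
\[
B_d(C)\le N_0(d,C)\le B_d(C+1).
\]
Since \(L_{d+1}\) is nondecreasing by Lemma~\ref{lem:ranks}, this gives
\[
L_{d+1}(B_d(C))\le L_{d+1}(N_0(d,C))\le L_{d+1}(B_d(C+1)).
\]

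We apply Lemma~\ref{lem:branch-rank} with \(h=d\ge2\), which is allowed since \(d\) is fixed and at least \(2\). With \(D=C\), it gives a lower bound \(L_{d+1}(B_d(C))\ge c_d 2^C\) for all large \(C\). With \(D=C+1\), it gives an upper bound \(L_{d+1}(B_d(C+1))\le c'_d2^{C+1}=2c'_d2^C\). Combining these two estimates yields \(L_{d+1}(N_0(d,C))=\Theta_d(2^C)\) as \(C\to\infty\).

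The only point needing care is that the constants in Lemma~\ref{lem:branch-rank} depend only on \(h\). Replacing \(D\) by \(D+1\) therefore changes the bound by at most the factor \(2\), which is absorbed into \(\Theta_d\).

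The sandwich \eqref{eq:tree-sandwich} itself rests on two facts, and I take both as established. The upper inequality is Lemma~\ref{lem:finite-dfs}. The lower inequality holds because the recursive branch with parameter \(C\) is an admissible whole tree. With these in hand there is no real obstacle, and the argument is a direct monotonicity step.
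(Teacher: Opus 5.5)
Your proposal is correct and matches the paper's proof, which is the one-line instruction to apply monotonicity of \(L_{d+1}\) and Lemma~\ref{lem:branch-rank} to both sides of the sandwich \(B_d(C)\le N_0(d,C)\le B_d(C+1)\). You apply that lemma with \(D=C\) for the lower bound and \(D=C+1\) for the upper bound, absorbing the resulting factor \(2\) into the constant, which is exactly what the paper leaves implicit.
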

	
	\begin{proof}
		Apply monotonicity and Lemma~\ref{lem:branch-rank} to both sides of \eqref{eq:tree-sandwich}.
	\end{proof}
	
	We now give the constructions used for the upper bound.
	The graph case will start the induction on $k$.
	
	\begin{lemma}
		\label{lem:graphs}
		For every integer $n\ge 8$,
		\[
		f(n,2)\le 2\lceil\log_2 n\rceil-1.
		\]
	\end{lemma}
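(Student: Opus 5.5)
The plan is an explicit construction. Graph joins turn sums of clique sizes into clique sizes, so the set of maximal-clique sizes becomes a sumset, which we make an interval of length almost \(n\).

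First, two structural facts. If \(G\) is a disjoint union of cliques \(K_a\cup K_b\), its maximal cliques are exactly the two parts, with sizes \(a\) and \(b\). If \(G=G_1\vee\cdots\vee G_r\) is a join (all edges between distinct \(G_i\)), then a set \(X\) is a maximal clique of \(G\) if and only if each \(X\cap V(G_i)\) is a maximal clique of \(G_i\). Hence the set of maximal-clique sizes of \(G\) is the sumset of the size sets of the \(G_i\). This is immediate from the definitions.

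Now take \(G_i=K_1\cup K_{1+2^{i-1}}\) for \(1\le i\le m-1\), together with a last factor \(G_m=K_1\cup K_{1+t}\) where \(1\le t\le 2^{m-1}\). The size set is
\[
m+\bigl(\{0,\dots,2^{m-1}-1\}+\{0,t\}\bigr)=m+\{0,1,\dots,2^{m-1}+t-1\},
\]
because \(t\le2^{m-1}\) means the two translated intervals overlap or abut. This gives \(2^{m-1}+t\) distinct sizes. The number of vertices is
\[
n=\sum_{i<m}(2+2^{i-1})+2+t=2m-1+2^{m-1}+t ,
\]
so exactly \(2m-1\) sizes in \(\{1,\dots,n\}\) are missing.

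Next, choose \(m\). We need \(t=n-2m+1-2^{m-1}\in[1,2^{m-1}]\), that is,
\[
2^{m-1}+2m\le n\le 2^m+2m-1 .
\]
Consecutive such ranges leave gaps of only two values, namely \(n=2^m+2m\) and \(n=2^m+2m+1\). For those \(n\), take the construction for \(n'=2^m+2m-1\) and join it with one or two universal vertices. This shifts every size up by one or two, so the number of missing sizes is at most \(2m+1\).

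It remains to compare these counts with \(\lceil\log_2 n\rceil\). In the main range, \(n>2^{m-1}\) gives \(\lceil\log_2 n\rceil\ge m\), so \(2m-1\le2\lceil\log_2 n\rceil-1\). In the gap cases, \(n>2^m\) gives \(\lceil\log_2 n\rceil\ge m+1\), so \(2m+1\le2\lceil\log_2 n\rceil-1\). The condition \(n\ge8\) only ensures that some valid \(m\ge1\) exists; small cases can be checked by hand.

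The main obstacle is the exact bookkeeping: making the chosen \(m\)-ranges, together with the gap values, cover all \(n\ge8\), and handling small \(m\) (for example \(m=1,2\), where the product over \(i<m\) is empty).
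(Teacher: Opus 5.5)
Your proof is correct and uses the same construction as the paper: a join of gadgets $K_1\cup K_{a_i+1}$, whose maximal-clique sizes form a shifted sumset of the sets $\{0,a_i\}$, which becomes an interval because each $a_i$ is at most one more than $a_1+\cdots+a_{i-1}$. The only difference is the choice of the $a_i$: the paper fixes $r=\lceil\log_2 n\rceil$ factors and takes the $a_i$ as increments of the binary prefixes of $S=n-2r$ (allowing $a_i=0$), which covers every $n\ge 8$ uniformly; your powers-of-two-plus-remainder choice leaves the two exceptional values $n=2^m+2m,\,2^m+2m+1$, which you patch correctly with universal vertices.
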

	
	\begin{proof}
		Put $r=\lceil\log_2 n\rceil$ and $S=n-2r$. Then $0\le S<2^r$. Define
		\[
		s_i=\left\lfloor\frac{S}{2^{r-i}}\right\rfloor\quad(0\le i\le r),
		\qquad a_i=s_i-s_{i-1}\quad(1\le i\le r).
		\]
		We have $s_0=0$, $s_r=S$, and $s_i\in\{2s_{i-1},2s_{i-1}+1\}$. Thus
		\[
		0\le a_i\le s_{i-1}+1.
		\]
		The subset sums of $a_1,\ldots,a_i$ contain all integers from $0$ to $s_i$. Indeed, this is clear for $i=0$,
		and the induction step follows because the two integer intervals $[0,s_{i-1}]$ and $a_i+[0,s_{i-1}]$
		overlap or are consecutive, and their union is $[0,s_i]$.
		
		For $1\le i\le r$, let $F_i$ be the disjoint union of a complete graph on $a_i+1$ vertices and a
		single vertex. Take the join of the graphs $F_1,\ldots,F_r$: retain their edges and add every edge
		between different graphs. The resulting graph has
		\[
		\sum_{i=1}^r(a_i+2)=S+2r=n
		\]
		vertices. Its maximal cliques are exactly the unions obtained by choosing one maximal clique
		in each $F_i$. The possible sizes in $F_i$ are $1$ and $a_i+1$; when $a_i=0$, these sizes coincide.
		Thus every size
		\[
		r,r+1,\ldots,r+S
		\]
		occurs. There are $S+1=n-2r+1$ such sizes, proving the claim.
	\end{proof}
	
	Fix $k\ge3$. In Lemmas~\ref{lem:dense-contributions} and~\ref{lem:tails}, we assume
	that there is a constant $A>0$, depending only on $k$, such that
	for every sufficiently large integer $s$,
	\begin{equation}
		f(s,k-1)\le A L_{k-1}(s)+A.\label{eq:induction-assumption}
	\end{equation}
	
	For a noncomplete $(k-1)$-uniform hypergraph $\mathcal{G}$ on $m\ge k-1$ vertices, define its
	\emph{contribution set} by
	\begin{equation}
		A(\mathcal{G})=\{0\}\cup\{m-|Y|-1:Y\text{ is a maximal clique of }\mathcal{G}\}.\label{eq:contribution-set}
	\end{equation}
	Every maximal clique of $\mathcal{G}$ has size between $k-2$ and $m-1$. Therefore
	\[
	A(\mathcal{G})\subseteq [0,m-k+1]\cap\mathbb{Z}.
	\]
	
	\begin{lemma}
		\label{lem:dense-contributions}
		Assume \eqref{eq:induction-assumption}. For every sufficiently large $m$, there is a noncomplete $(k-1)$-uniform hypergraph
		$\mathcal{G}$ on $m$ vertices such that
		\[
		0,m-k+1\in A(\mathcal{G})
		\]
		and $A(\mathcal{G})$ misses at most $A L_{k-1}(m)+A$ integers of $[0,m-k+1]$.
	\end{lemma}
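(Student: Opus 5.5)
The plan is to add to an extremal $(k-1)$-uniform hypergraph on $m-1$ vertices one vertex that lies in no edge. Let $s=m-1$, and let $\mathcal{H}'$ be a $(k-1)$-uniform hypergraph on $s$ vertices whose maximal cliques realise $g(s,k-1)$ distinct sizes. By \eqref{eq:induction-assumption}, for $m$ large at most $f(s,k-1)\le A L_{k-1}(s)+A$ integers of $\{1,\ldots,s\}$ are not maximal-clique sizes of $\mathcal{H}'$. Define $\mathcal{G}$ by adding a new vertex $v$ to $\mathcal{H}'$ and no new edges, so $|V(\mathcal{G})|=m$. Since $m\ge k-1$ and any $(k-1)$-set containing $v$ is a non-edge, $\mathcal{G}$ is noncomplete.

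Next I will describe the maximal cliques of $\mathcal{G}$ exactly. Since $s\ge k-2$, every maximal clique of $\mathcal{H}'$ has at least $k-2$ vertices, because every set with fewer than $k-1$ vertices is complete.

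\emph{Cliques avoiding $v$.} Let $Y$ be a maximal clique of $\mathcal{H}'$. Then $Y\cup\{v\}$ contains a $(k-1)$-subset through $v$, and this subset is not an edge (here $k\ge3$ ensures $k-2\ge1$). Hence $Y$ is still maximal in $\mathcal{G}$. Conversely, a maximal clique of $\mathcal{G}$ avoiding $v$ is complete in $\mathcal{H}'$. It is also maximal there, since extending it within $V(\mathcal{H}')$ would extend it in $\mathcal{G}$.

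\emph{Cliques containing $v$.} Any complete set containing $v$ has at most $k-2$ vertices, since a larger set would contain a $(k-1)$-set through $v$, which is not an edge. Every $(k-2)$-set containing $v$ is complete and cannot be extended. Such sets exist because $m\ge k-1$, so $\mathcal{G}$ has maximal cliques of size $k-2$.

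The first paragraph yields every contribution $m-1-|Y|$ with $|Y|$ a maximal-clique size of $\mathcal{H}'$, and the second yields $m-(k-2)-1=m-k+1$. Also $0\in A(\mathcal{G})$ by definition \eqref{eq:contribution-set}. So $0,m-k+1\in A(\mathcal{G})$.

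It remains to count misses. The map $t\mapsto m-1-t$ is a bijection from $[k-2,m-1]$ onto $[0,m-k+1]$. An integer of $[0,m-k+1]$ can be missing from $A(\mathcal{G})$ only if the corresponding $t\in[k-2,s]$ is not a maximal-clique size of $\mathcal{H}'$. Thus the number of misses is at most $f(s,k-1)\le A L_{k-1}(m-1)+A\le A L_{k-1}(m)+A$, where the last step uses monotonicity from Lemma~\ref{lem:ranks}. For $k=3$ the hypothesis \eqref{eq:induction-assumption} concerns graphs and is the assumed input, supplied by Lemma~\ref{lem:graphs}.

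The only delicate point is checking that adding the edgeless vertex $v$ neither destroys the maximality of the old cliques nor creates cliques of unexpected sizes. Both facts rely on $k-2\ge1$ and on old maximal cliques having at least $k-2$ vertices, which I have verified above.
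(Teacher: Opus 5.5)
Your proof is correct and takes essentially the paper's approach. In both, you pad an extremal $(k-1)$-uniform hypergraph with edgeless vertices: old maximal cliques survive, a new maximal clique of size $k-2$ supplies the contribution $m-k+1$, and the misses are bounded by $f(\cdot,k-1)$. The only difference is bookkeeping: the paper adds $k-2$ edgeless vertices to a hypergraph on $m-k+2$ vertices, so that the new vertices themselves form the $(k-2)$-clique, whereas you add a single vertex to a hypergraph on $m-1$ vertices and use the $(k-2)$-sets through it.
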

	
	\begin{proof}
		Put $m_0=m-k+2$. Choose an $m_0$-vertex $(k-1)$-uniform hypergraph $\mathcal{F}$ with
		$g(m_0,k-1)$ distinct maximal-clique sizes. Add $k-2$ new vertices $a_1,\ldots,a_{k-2}$, and add no
		edges containing any of these vertices. Call the resulting hypergraph $\mathcal{G}$.
		
		Every maximal clique of $\mathcal{F}$ remains maximal in $\mathcal{G}$. In fact, it has at least $k-2$ vertices,
		so adding a new vertex would create a missing $(k-1)$-set. The set $\{a_1,\ldots,a_{k-2}\}$ is itself a
		maximal clique of $\mathcal{G}$, and its contribution is $m-k+1$. Also, $\mathcal{G}$ is noncomplete.
		
		The interval $[0,m-k+1]\cap\mathbb{Z}$ has $m_0$ elements. The maximal cliques inherited from $\mathcal{F}$
		give $g(m_0,k-1)$ distinct contributions. Hence at most $f(m_0,k-1)$ elements of this interval
		are absent from $A(\mathcal{G})$. Apply \eqref{eq:induction-assumption} and the monotonicity of $L_{k-1}$.
	\end{proof}
	
	\begin{lemma}[Filling gaps in a sumset]
		\label{lem:sumset}
		Let $A_0\subseteq[0,c]\cap\mathbb{Z}$ contain $0$ and $c$, and suppose that at most $h$
		integers in this interval are absent from $A_0$. If $B\supseteq[0,L]\cap\mathbb{Z}$, where $L$ is an integer with
		$L\ge h$, then
		\[
		A_0+B\supseteq[0,c+L]\cap\mathbb{Z}.
		\]
	\end{lemma}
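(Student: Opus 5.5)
The plan is to show that every integer $t\in[0,c+L]$ can be written as $a+b$ with $a\in A_0$ and $b\in[0,L]$, since $[0,L]\cap\mathbb{Z}\subseteq B$. Fix such a $t$. The admissible values of $a$ are the elements of $A_0$ lying in the window
\[
W_t=[\max\{0,t-L\},\,\min\{c,t\}]\cap\mathbb{Z}.
\]
For any such $a$, the difference $b=t-a$ lies in $[0,L]$. So it suffices to show that $W_t\cap A_0\neq\emptyset$ for every $t$.

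The argument splits into two cases according to the length of the window. If $t\le L$, then $0\in W_t$ because $\max\{0,t-L\}=0\le\min\{c,t\}$, and $0\in A_0$ by hypothesis. If $t\ge c$, then $c\in W_t$ because $t-L\le c\le t$ (using $t\le c+L$), and $c\in A_0$ by hypothesis. In the remaining case $L<t<c$, the window is $[t-L,t]$. It is entirely contained in $[0,c]$ and contains $L+1$ integers. At most $h\le L$ integers of $[0,c]$ are absent from $A_0$. Hence at least $L+1-h\ge1$ integers of the window belong to $A_0$, and the window is met.

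The only delicate point is the boundary bookkeeping in the first two cases, namely checking that the endpoints $0$ and $c$ really fall into the window. This is exactly where the hypotheses $0,c\in A_0$ are used, and the middle case is where $L\ge h$ is used. There is no substantial obstacle beyond this.
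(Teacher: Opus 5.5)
Your proposal is correct and follows the paper's proof essentially verbatim: the same three cases ($t\le L$ handled by $0\in A_0$, $t\ge c$ handled by $c\in A_0$, and $L<t<c$ handled by the window $[t-L,t]\subseteq[0,c]$ containing $L+1>h$ integers). The window notation $W_t$ only makes explicit the bookkeeping that the paper performs inline.
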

	
	\begin{proof}
		Take an integer $z$ with $0\le z\le c+L$. If $z\le L$, use $z=0+z$. If $z\ge c$, use
		$z=c+(z-c)$. In the remaining case, $L<z<c$. The interval $[z-L,z]$ consists of $L+1>h$
		integers in $[0,c]$, so it contains an element $a\in A_0$. Then $z-a\in[0,L]\cap\mathbb{Z}\subseteq B$.
	\end{proof}
	
	\begin{lemma}[The selector construction]
		\label{lem:selectors}
		Let $\mathcal{G}_1,\ldots,\mathcal{G}_r$ be pairwise disjoint noncomplete $(k-1)$-uniform
		hypergraphs, with $|V(\mathcal{G}_i)|=m_i\ge k-1$, and put $n=\sum_i(m_i+1)$. There is an $n$-vertex
		$k$-uniform hypergraph whose maximal-clique sizes include
		\[
		\{n-r-d:d\in A(\mathcal{G}_1)+\cdots+A(\mathcal{G}_r)\}.
		\]
		In particular, if
		\begin{equation}
			[0,n-kr]\cap\mathbb{Z}\subseteq A(\mathcal{G}_1)+\cdots+A(\mathcal{G}_r),
			\qquad n-kr\ge0,\label{eq:selector-interval}
		\end{equation}
		then
		\begin{equation}
			f(n,k)\le kr-1.\label{eq:selector-bound}
		\end{equation}
	\end{lemma}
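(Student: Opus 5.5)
The plan is to glue the hypergraphs \(\mathcal{G}_i\) together with one extra ``selector'' vertex per block. Each block is then forced to behave independently, and its choice of maximal clique records an element of \(A(\mathcal{G}_i)\).

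For each \(i\), let \(V_i=V(\mathcal{G}_i)\), add a new vertex \(u_i\), and put \(W_i=V_i\cup\{u_i\}\), so that \(|W_i|=m_i+1\) and \(V=\bigcup_i W_i\) has \(n\) vertices. Define the \(k\)-uniform hypergraph \(\cH\) on \(V\) by declaring every \(k\)-set to be an edge except the sets \(\{u_i\}\cup T\) with \(T\in\binom{V_i}{k-1}\setminus E(\mathcal{G}_i)\), for some \(i\).

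The first step is to describe the complete sets of \(\cH\). A set \(X\subseteq V\) is complete if and only if, for every \(i\) with \(u_i\in X\), the set \(X\cap V_i\) is complete in \(\mathcal{G}_i\). The only forbidden \(k\)-sets lie inside a single \(W_i\) and contain \(u_i\), so there are no constraints between blocks and no constraint on blocks without a selector.

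The second step is to verify that the following sets are maximal cliques. Take \(X=\bigcup_i X_i\), where each \(X_i\) is either \(V_i\) or \(\{u_i\}\cup Y_i\) with \(Y_i\) a maximal clique of \(\mathcal{G}_i\).
\begin{itemize}
\item If \(X_i=V_i\), adding \(u_i\) would require \(V_i\) to be complete in \(\mathcal{G}_i\). This fails because \(\mathcal{G}_i\) is noncomplete and \(m_i\ge k-1\).
\item If \(X_i=\{u_i\}\cup Y_i\) and \(v\in V_i\setminus Y_i\), then \(Y_i\cup\{v\}\) is not complete in \(\mathcal{G}_i\), by maximality of \(Y_i\). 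So it contains a non-edge \(T\), and \(\{u_i\}\cup T\subseteq X\cup\{v\}\) is a forbidden \(k\)-set.
\end{itemize}
Hence no vertex can be added, and \(X\) is maximal.

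The third step is the size count. Its size is \(\sum_i |X_i|\). If \(X_i=V_i\), then \(|X_i|=m_i=(m_i+1)-1-0\). If \(X_i=\{u_i\}\cup Y_i\), then \(|X_i|=(m_i+1)-1-(m_i-|Y_i|-1)\). So \(|X|=n-r-\sum_i d_i\), where each \(d_i\in A(\mathcal{G}_i)\), with the value \(0\) realised by the choice \(V_i\). Every choice of \((d_1,\ldots,d_r)\) is realised, which proves the first claim.

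For \eqref{eq:selector-bound}, assume \eqref{eq:selector-interval}. Every integer \(n-r-d\) with \(0\le d\le n-kr\) is then a maximal-clique size, i.e.\ all sizes in \([(k-1)r,\,n-r]\). That is \(n-kr+1\) distinct sizes, so \(g(n,k)\ge n-kr+1\) and \(f(n,k)\le kr-1\).

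The main point needing care is the maximality check in the second step. It is exactly where noncompleteness of \(\mathcal{G}_i\) and \(m_i\ge k-1\) are used: otherwise the choice \(X_i=V_i\) could be extended by \(u_i\). The cross-block independence makes everything else routine.
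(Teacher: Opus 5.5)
Your proposal is correct and follows essentially the same route as the paper. You build the same hypergraph, whose only non-edges are \(\{u_i\}\cup T\) with \(T\) a non-edge of \(\mathcal{G}_i\); you use the same inactive/active block choices and the same two-case maximality check, relying on noncompleteness for the inactive blocks and on maximality of \(Y_i\) for the active ones; and you do the same size accounting \(n-r-\sum_i d_i\). The one step you leave implicit is that each union \(X\) is itself complete, but this follows immediately from your first-step characterization.
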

	
	\begin{proof}
		Add new vertices $p_1,\ldots,p_r$, one for each component. A $k$-set is an edge if it contains
		no selector or at least two selectors. A $k$-set containing exactly the selector $p_i$ is also an edge,
		unless its other $k-1$ vertices all lie in $V(\mathcal{G}_i)$; in that case it is an edge precisely when those
		$k-1$ vertices form an edge of $\mathcal{G}_i$.
		
		For each $i$, choose either the inactive state $V(\mathcal{G}_i)$ or an active state $Y_i\cup\{p_i\}$, where $Y_i$ is a
		maximal clique of $\mathcal{G}_i$. The union $X$ of the chosen states is a clique by the edge definition. It is
		maximal as well. If an ordinary vertex $x$ is omitted from an active component, maximality of
		$Y_i$ supplies a missing $(k-1)$-set in $Y_i\cup\{x\}$; adding $p_i$ gives a missing $k$-set in $X\cup\{x\}$. If
		the omitted vertex is the selector $p_i$ of an inactive component, noncompleteness of $\mathcal{G}_i$ supplies
		a missing $(k-1)$-set in $V(\mathcal{G}_i)$, and adding $p_i$ again prevents an extension of $X$.
		
		Starting with all components inactive gives size $\sum_i m_i=n-r$. An active state in
		component $i$ reduces this size by $m_i-|Y_i|-1$. This proves the claim about the realized
		sizes. Under \eqref{eq:selector-interval}, there are at least $n-kr+1$ such sizes, which proves \eqref{eq:selector-bound}.
	\end{proof}
	
	We now construct families to which Lemma~\ref{lem:selectors} applies. Call a family $(\mathcal{G}_1,\ldots,\mathcal{G}_r)$ an
	\emph{admissible $N$-tail} if its components are noncomplete $(k-1)$-uniform hypergraphs of order at
	least $k-1$,
	\[
	\sum_{i=1}^r\bigl(|V(\mathcal{G}_i)|+1\bigr)=N,\qquad N-kr\ge0,
	\]
	and its contribution sum contains $[0,N-kr]\cap\mathbb{Z}$. The extra vertex in each summand is
	reserved for its selector.
	
	\begin{lemma}
		\label{lem:tails}
		Assume \eqref{eq:induction-assumption}. There are constants $N_0$ and $R$, depending only on $k$, such that every integer
		$N\ge N_0$ has an admissible $N$-tail with at most
		\[
		2L_k(N)+R
		\]
		components.
	\end{lemma}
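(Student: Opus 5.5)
We build the tail greedily from the top down, by induction on $N$ (strong induction in the rank $L_k$). The plan is to write $N=(m+1)+N'$, where the first component is a hypergraph $\mathcal{G}$ supplied by Lemma~\ref{lem:dense-contributions} on $m$ vertices, and the remaining part is an admissible $N'$-tail with $N'$ much smaller than $N$ but still large. We choose $N'$ so that $L_{k-1}(N')$ is comparable to $L_{k-1}(N)$ only through one iteration. Concretely, we take $N'$ to be roughly $L_{k-1}(N)$ times a constant, plus the constant needed to keep $N'\ge N_0$. Then $m=N-N'-1$.

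The sumset step uses Lemma~\ref{lem:sumset}. Apply it with $A_0=A(\mathcal{G})$ and $c=m-k+1$. By Lemma~\ref{lem:dense-contributions}, $A_0$ contains $0$ and $c$ and misses at most $h=AL_{k-1}(m)+A$ integers. Take $B$ to be the contribution sum of the $N'$-tail, which contains $[0,L]$ with $L=N'-kr'$, where $r'$ is the number of its components. If $L\ge h$, we get
\[
A_0+B\supseteq[0,m-k+1+N'-kr']=[0,N-k(r'+1)],
\]
since $m+1+N'=N$. So the extended family is an admissible $N$-tail with $r'+1$ components. The requirement $L\ge h$ reads $N'-kr'\ge AL_{k-1}(N)+A$, using $m\le N$ and monotonicity. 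The inductive bound $r'\le 2L_k(N')+R$ is tiny compared with $N'$, because $L_k(x)=o(x)$ by Lemma~\ref{lem:ranks}. Hence choosing $N'=\lceil 2AL_{k-1}(N)\rceil+C_0$ suffices once $N$ is large.

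Counting components: Lemma~\ref{lem:ranks} gives $L_k(N')=L_k(2AL_{k-1}(N)+C_0)=L_k(L_{k-1}(N))+O(1)=L_k(N)-1+O(1)$. This naive bound would accumulate an $O(1)$ loss per step, which is the main obstacle. Instead we iterate explicitly. Set $N^{(0)}=N$ and $N^{(t+1)}=\lceil 2AL_{k-1}(N^{(t)})\rceil+C_0$, stopping once $N^{(t)}$ drops below a fixed threshold $N_1$. For the base, any $N\in[N_0,N_1]$ is handled by a bounded number of components; for instance, a single $\mathcal{G}$ with no large gaps, or direct small constructions, absorbed into $R$.

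The number of steps is controlled by comparing $F(x)=2AL_{k-1}(x)+C_0$ with $L_{k-1}$. For large $x$, $F(x)\le L_{k-1}^{\circ 2}(x)$ fails in general, but $F(F(x))\le L_{k-1}(x)$ holds once $x$ is large. Indeed, $L_{k-1}(2AL_{k-1}(x)+C_0)\le L_{k-1}(x)/(2A)$ eventually, because $L_{k-1}(y)=o(y)$. Thus two steps of $F$ do at least as much as one step of $L_{k-1}$ above the threshold. This gives at most $2L_k(N)+O(1)$ steps, which is the source of the constant $2$. The delicate points are making the threshold $N_1$ uniform and choosing $N_0$ so that every $N^{(t)}$ before stopping is in the range where the asymptotic inequalities and Lemma~\ref{lem:dense-contributions} apply. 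We also need the last tail (the base case) to have $N-kr\ge0$. We would verify these by choosing $N_1$ first, then $C_0$ and $R$.
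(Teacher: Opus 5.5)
Your architecture matches the paper's: one dense main component from Lemma~\ref{lem:dense-contributions} plus a much smaller tail, with two recursion steps beating one application of $L_{k-1}$. But the admissibility step has a genuine gap. You justify $N'-kr'\ge AL_{k-1}(N)+A$ by feeding back $r'\le 2L_k(N')+R$ and calling this tiny compared with $N'$ ``once $N$ is large''. This inequality is needed at every recursive step, in particular when $N'$ lies in or near the base range, and there $R$ is not small.

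Every recursive argument satisfies $N'\ge C_0$, so termination forces $N_1\ge C_0$. The base tails on $[N_0,N_1]$ are in practice the elementary $\mathcal{E}_{k-1},\mathcal{E}_k$ tails, with about $N/(k+1)$ components. (A one-component tail would need $A(\mathcal{G})$ to be all of $[0,m-k+1]$.) So $kR$ is of order $N_1\ge C_0$, comparable to or larger than the smallest recursive arguments $N'\approx C_0+2AL_{k-1}(N)$. Raising the threshold only raises $R$, so taking $N$ large cannot break this circularity.

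Choosing $N_1$ before $C_0$ does not help either. Absorbing $kR$ would need $C_0$ of order $N_1$, whereas your two-step inequality $F(F(x))\le L_{k-1}(x)$ needs $L_{k-1}(x)\ge C_0$ and so fails just above $N_1$. Nor do the actual base tails save the step: they give only $N'-kr'\approx N'/(k+1)$. With your multiplier $2A$, this drops below $AL_{k-1}(N)+A$ whenever $N'\le N_1$ and $L_{k-1}(N)$ is large compared with $C_0$, which occurs once $N_1\gg C_0$.

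The missing idea is the paper's linear density invariant $n-k\rho(n)\ge\eta n$ with $\eta=1/(2(k+1))$. It is proved by strong induction, entirely separately from the component count. It holds for the elementary tails once $n\ge 2k^2$. It propagates because $n-k\rho(n)=n-k-k\rho(q)\ge n-k-(1-\eta)q\ge\eta n$ whenever $q\le n/2$ and $n\ge 2k/(1-\eta)$.

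Admissibility then only needs $\eta q\ge AL_{k-1}(n)+A$. This is why the paper takes $q(x)=\max\{Q,\lceil CL_{k-1}(x)+C\rceil\}$ with $C\ge A/\eta$; your multiplier $2A$ is too small, since $2A\eta<A$. Because this argument never touches $R$, the constants can be fixed in the correct order. First choose $Q$ and $C$. Then choose $N_1$ large enough that $q(q(x))\le L_{k-1}(x)$ and $q(n)\le n/2$ above it. Only then set $R$ to be the largest base component count. After that, your count of at most $2L_k(N)$ recursive steps goes through as you describe.
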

	
	\begin{proof}
		We first construct less efficient tails for every sufficiently large order. Let $\mathcal{E}_j$ be the
		empty $(k-1)$-uniform hypergraph on $j$ vertices. The component $\mathcal{E}_{k-1}$, together with its
		selector, uses $k$ vertices and contributes $\{0\}$, whereas $\mathcal{E}_k$ uses $k+1$ vertices and contributes
		$\{0,1\}$. For any integer $N$, choose $a\in\{0,\ldots,k\}$ such that $a\equiv-N\pmod{k+1}$, and put
		\[
		b=\frac{N-ka}{k+1}.
		\]
		If $N\ge k^2$, then $b$ is a nonnegative integer. The family of $a$ copies of $\mathcal{E}_{k-1}$ and $b$ copies of
		$\mathcal{E}_k$ is admissible: it has $r=a+b$ components and its contribution sum is exactly $[0,b]\cap\mathbb{Z}$,
		where $b=N-kr$.
		
		Fix
		\[
		\eta=\frac{1}{2(k+1)},\qquad N_0\ge2k^2,
		\]
		with $N_0$ an integer.
		The elementary tail just constructed satisfies
		\begin{equation}
			N-kr=b\ge\frac{N-k^2}{k+1}\ge\eta N\qquad(N\ge N_0).\label{eq:base-density}
		\end{equation}
		Choose constants $Q\ge N_0$ and $C\ge A/\eta$, with $Q$ an integer, and define
		\begin{equation}
			q(x)=\max\{Q,\lceil C L_{k-1}(x)+C\rceil\}.\label{eq:q-definition}
		\end{equation}
		Let $L=L_{k-1}$. By Lemma~\ref{lem:ranks}, or directly from the definition when $L=L_2$,
		the function $L$ is nondecreasing, tends to infinity, and satisfies $L(x)=o(x)$. Thus, for a fixed constant $B$, $q(x)\le B L(x)$ for all
		sufficiently large $x$. Also, $B L(By)\le y$ for all sufficiently large $y$. It follows that
		\begin{equation}
			q(q(x))\le B L(q(x))\le B L(B L(x))\le L(x)\label{eq:q-two-steps}
		\end{equation}
		for all sufficiently large $x$.
		
		Choose a fixed integer $N_1\ge\max\{N_0,Q,K\}$ sufficiently large that, for every integer
		$n>N_1$, \eqref{eq:q-two-steps} holds and
		\begin{equation}
			q(n)\le\frac n2,\qquad n\ge\frac{2k}{1-\eta},\label{eq:cutoff}
		\end{equation}
		and $m=n-q(n)-1$ is large enough for Lemma~\ref{lem:dense-contributions}. All constants and the base interval
		$[N_0,N_1]$ are now fixed.
		
		For $N_0\le n\le N_1$, use the elementary admissible tail above. For $n>N_1$, we construct a
		tail recursively, using one main component of order $m=n-q(n)-1$ supplied by Lemma~\ref{lem:dense-contributions},
		together with the already constructed $q(n)$-tail. This recursion is well defined because
		$N_0\le q(n)<n$.
		
		Write $\rho(n)$ for the number of components in the selected tail. We prove admissibility and
		the density inequality
		\begin{equation}
			n-k\rho(n)\ge\eta n\label{eq:tail-density}
		\end{equation}
		together by strong induction on $n$. On the base interval, \eqref{eq:base-density} gives both statements. For
		$n>N_1$, put $q=q(n)$ and $m=n-q-1$. By the induction hypothesis, the tail contributes
		every integer in $[0,q-k\rho(q)]$, and
		\[
		q-k\rho(q)\ge\eta q\ge A L_{k-1}(n)+A\ge A L_{k-1}(m)+A.
		\]
		Thus its interval is long enough to smooth the main component's contribution set. Lemmas~\ref{lem:dense-contributions} and~\ref{lem:sumset} show that the new contribution sum contains every integer from $0$ to
		\[
		(m-k+1)+(q-k\rho(q))=n-k(1+\rho(q)).
		\]
		The new family has exactly $n$ vertices when selectors are included, and its component count
		is
		\begin{equation}
			\rho(n)=1+\rho(q(n)).\label{eq:component-recursion}
		\end{equation}
		Finally, the density estimate follows directly from the smaller instance and \eqref{eq:cutoff}:
		\begin{align*}
			n-k\rho(n)&=n-k-k\rho(q)\\
			&\ge n-k-(1-\eta)q\\
			&\ge\frac{1+\eta}{2}n-k\\
			&\ge\eta n.
		\end{align*}
		This proves admissibility and closes the induction without using any estimate for the recursion
		depth.
		
		It remains to count the components. Let $R$ be the largest number of components in any
		of our finitely many base tails. Along the recursion, write $x_0=n$ and $x_{j+1}=q(x_j)$ until
		a base argument is reached. By \eqref{eq:q-two-steps} and monotonicity, as long as the indicated recursive
		arguments exist,
		\[
		x_{2j}\le L^{\circ j}(n).
		\]
		By definition, $L^{\circ L_k(n)}(n)\le K\le N_1$. Therefore at most $2L_k(n)$ recursive steps are needed.
		Each step adds one component, so \eqref{eq:component-recursion} gives
		\[
		\rho(n)\le 2L_k(n)+R.
		\]
	\end{proof}
	
	The remaining lemmas will be used for the lower bound. They hold for every
	fixed $k\ge3$ and do not require assumption \eqref{eq:induction-assumption}.
	We use Gao's insertion tree~\cite{Gao}, which we now define.
	
	Let $H$ be a $k$-uniform hypergraph on $n\ge k$ vertices. Suppose that $H$ has $r=n-C$ distinct sizes of maximal cliques. Choose one maximal clique of each size and write them as
	\[
	|X_0|>|X_1|>\cdots>|X_{r-1}|.
	\]
	For $0\le i<r$, put $D_i=V(H)\setminus X_i$. There are $r-i$ distinct positive clique sizes at most $|X_i|$, so
	\begin{equation}
		|X_i|\ge r-i,\qquad |D_i|\le C+i. \label{eq:complement-bound}
	\end{equation}
	Every maximal clique has size at least $k-1$. Thus $r\le n-k+2$, and hence $C\ge k-2\ge1$.
	
	Associate a node $x_i$ with each $X_i$. Begin with the root $x_0$, and set
	\[
	A_{x_0}=X_0,\qquad B_{x_0}=D_0.
	\]
	Suppose that $x_0,\ldots,x_{i-1}$ have been inserted. To insert $x_i$, start at the root $u=x_0$. If $u$ has a child $w$ such that
	\[
	X_w\cap B_u=X_i\cap B_u,
	\]
	replace $u$ by $w$ and repeat. If there is no such child, attach $x_i$ as a new child of $u$ and define
	\begin{equation}
		A_{x_i}=A_u\cap X_i,\qquad B_{x_i}=A_u\setminus X_i. \label{eq:insertion-sets}
	\end{equation}
	Here and below, $X_{x_i}$ means $X_i$, and similarly for $D_{x_i}$. Children of a fixed node have different intersections with its $B$-set, so the child used in a descent is unique. Each descent moves strictly down the finite tree already constructed. Thus the procedure stops and defines a rooted tree on all $r$ nodes. Every parent has a smaller insertion index than its children.
	
	Consider a root path
	\[
	y_0,y_1,\ldots,y_d,\qquad y_0=x_0,
	\]
	and define
	\begin{equation}
		S_i=X_{y_i}\cap B_{y_{i-1}}\qquad (1\le i\le d). \label{eq:signatures}
	\end{equation}
	Whenever a later node is inserted below $y_i$, the insertion procedure passes from $y_{i-1}$ to $y_i$ by testing this intersection. It follows that
	\begin{equation}
		X_{y_j}\cap B_{y_{i-1}}=S_i\qquad (1\le i\le j\le d). \label{eq:signature-persistence}
	\end{equation}
	Repeatedly applying \eqref{eq:insertion-sets} gives the disjoint partition
	\[
	V(H)=A_{y_j}\sqcup B_{y_j}\sqcup B_{y_{j-1}}\sqcup\cdots\sqcup B_{y_0}.
	\]
	Since $X_{y_j}\cap B_{y_j}=\varnothing$, this partition and \eqref{eq:signature-persistence} imply
	\begin{equation}
		X_{y_j}=A_{y_j}\sqcup S_1\sqcup\cdots\sqcup S_j. \label{eq:path-partition}
	\end{equation}
	We call the sets $S_i$ the path signatures. They are pairwise disjoint. Moreover, every $S_j$ is nonempty: otherwise \eqref{eq:path-partition} and $A_{y_j}\subseteq A_{y_{j-1}}$ would imply $X_{y_j}\subseteq X_{y_{j-1}}$, contrary to the fact that these are distinct maximal cliques. Notice also that
	\begin{equation}
		S_1\subseteq D_0,\qquad S_i\subseteq B_{y_{i-1}}\subseteq A_{y_{i-2}}\subseteq X_0\qquad (i\ge2). \label{eq:signature-location}
	\end{equation}
	A tuple with one vertex from each of $S_1,\ldots,S_t$ is called a
	\emph{transversal $t$-tuple}. We next prove a missing-edge property
	and use it to bound the depth.
	
	\begin{lemma}[A missing edge along a path]
		\label{lem:missing-edge}
		Let $y_0,\ldots,y_{k-1}$ be a root path in the insertion tree. For every $x\in B_{y_{k-1}}$, there are $s_i\in S_i$ for $1\le i\le k-1$ such that
		\[
		\{x,s_1,\ldots,s_{k-1}\}\notin E(H).
		\]
	\end{lemma}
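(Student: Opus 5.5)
The plan is to argue by contradiction against the maximality of the clique \(X_{y_{k-1}}\). Fix \(x\in B_{y_{k-1}}\). By \eqref{eq:insertion-sets}, \(B_{y_{k-1}}=A_{y_{k-2}}\setminus X_{y_{k-1}}\). Hence \(x\notin X_{y_{k-1}}\) and \(x\in A_{y_{k-2}}\subseteq A_{y_{i-1}}\) for every \(1\le i\le k-1\). Suppose that \(\{x,s_1,\ldots,s_{k-1}\}\in E(H)\) for every transversal \((k-1)\)-tuple. We will show that \(X_{y_{k-1}}\cup\{x\}\) is complete. This contradicts the maximality of \(X_{y_{k-1}}\).

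By \eqref{eq:path-partition}, \(X_{y_{k-1}}=A_{y_{k-1}}\sqcup S_1\sqcup\cdots\sqcup S_{k-1}\). Take any \((k-1)\)-subset \(T\subseteq X_{y_{k-1}}\). We must show \(T\cup\{x\}\in E(H)\). There are two cases.

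\emph{Case 1: \(T\) meets every \(S_i\).} The \(S_i\) are pairwise disjoint, there are \(k-1\) of them, and \(|T|=k-1\). So \(T\) consists of exactly one vertex from each \(S_i\), that is, \(T\) is a transversal tuple. The assumption then gives \(T\cup\{x\}\in E(H)\).

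\emph{Case 2: \(T\) misses \(S_i\) for some \(1\le i\le k-1\).} I claim that
\[
\bigl(X_{y_{k-1}}\setminus S_i\bigr)\cup\{x\}\subseteq X_{y_{i-1}}.
\]
Since \(X_{y_{i-1}}\) is complete, this gives \(T\cup\{x\}\in E(H)\). To verify the inclusion, I would check the pieces one at a time, using \(X_{y_{i-1}}=A_{y_{i-1}}\sqcup S_1\sqcup\cdots\sqcup S_{i-1}\) from \eqref{eq:path-partition}.
\begin{itemize}
\item The sets \(S_1,\ldots,S_{i-1}\) lie in \(X_{y_{i-1}}\) directly.
\item \(A_{y_{k-1}}\subseteq A_{y_{i-1}}\), because the \(A\)-sets decrease along a path.
\item \(x\in A_{y_{k-2}}\subseteq A_{y_{i-1}}\), as noted above.
\item For \(l>i\), \eqref{eq:insertion-sets} gives \(S_l\subseteq B_{y_{l-1}}\subseteq A_{y_{l-2}}\). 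Since \(l-2\ge i-1\), this gives \(A_{y_{l-2}}\subseteq A_{y_{i-1}}\).
\end{itemize}
So every piece lies in \(A_{y_{i-1}}\subseteq X_{y_{i-1}}\), or directly in \(X_{y_{i-1}}\).

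Thus every \(k\)-subset of \(X_{y_{k-1}}\cup\{x\}\) that contains \(x\) is an edge. The \(k\)-subsets not containing \(x\) are edges because \(X_{y_{k-1}}\) is complete. Hence \(X_{y_{k-1}}\cup\{x\}\) is complete, which is the desired contradiction.

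The main point needing care is the containment bookkeeping in Case 2. Specifically, one must check that the later signatures \(S_l\) with \(l>i\), and the vertex \(x\) itself, lie inside \(A_{y_{i-1}}\). This relies on the index shift \(S_l\subseteq A_{y_{l-2}}\) from \eqref{eq:signature-location}, together with the nesting of the \(A\)-sets along the path. Case 1 is immediate from disjointness and pigeonhole.
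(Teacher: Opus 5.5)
Your proof is correct and is essentially the paper's argument. The paper takes a missing $k$-set $\{x\}\cup Z$ directly from the maximality of $X_{y_{k-1}}$, then uses the same inclusion $(X_{y_{k-1}}\setminus S_i)\cup\{x\}\subseteq X_{y_{i-1}}$ to force $Z$ to meet every $S_i$; you instead phrase these two facts as a contradiction showing that $X_{y_{k-1}}\cup\{x\}$ would be complete.
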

	
	\begin{proof}
		Since $x\notin X_{y_{k-1}}$ and $X_{y_{k-1}}$ is maximal, there is a set $Z\subseteq X_{y_{k-1}}$ of size $k-1$ for which $\{x\}\cup Z$ is not an edge.
		
		Fix $i\in\{1,\ldots,k-1\}$. We claim that
		\[
		x\in X_{y_{i-1}},\qquad X_{y_{k-1}}\setminus S_i\subseteq X_{y_{i-1}}.
		\]
		Indeed, $x\in B_{y_{k-1}}\subseteq A_{y_{i-1}}$, and $A_{y_{k-1}}\subseteq A_{y_{i-1}}$. For $j<i$, the set $S_j$ is contained in $X_{y_{i-1}}$ by persistence. For $j>i$, we have
		\[
		S_j\subseteq B_{y_{j-1}}\subseteq A_{y_{j-2}}\subseteq A_{y_{i-1}}.
		\]
		The claim therefore follows from \eqref{eq:path-partition}. If $Z$ missed $S_i$, the missing edge $\{x\}\cup Z$ would be contained in the clique $X_{y_{i-1}}$, a contradiction. Thus $Z$ meets all $k-1$ pairwise disjoint signatures. Since $|Z|=k-1$, it contains exactly one vertex from each of them.
	\end{proof}
	
	\begin{lemma}[Depth and degree bounds]
		\label{lem:depth-degree}
		Every node has depth at most $k-1$, and
		\[
		\deg_T(x_i)\le2^{C+i}\qquad (0\le i<r).
		\]
	\end{lemma}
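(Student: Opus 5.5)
The two assertions are proved separately. Both rest on the insertion-tree structure: the path signatures and the missing-edge property of Lemma~\ref{lem:missing-edge}.

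\emph{Depth.} We argue by contradiction. Suppose some node has depth at least $k$. Then there is a root path $y_0,\ldots,y_k$, and we pass to its prefix $y_0,\ldots,y_{k-1}$. The signature $S_k=X_{y_k}\cap B_{y_{k-1}}$ is nonempty, as shown after \eqref{eq:path-partition}, so we may pick $x\in S_k\subseteq B_{y_{k-1}}$. Lemma~\ref{lem:missing-edge} then yields $s_i\in S_i$ for $1\le i\le k-1$ such that $\{x,s_1,\ldots,s_{k-1}\}$ is not an edge. By \eqref{eq:path-partition} applied at $j=k$, we have $S_1\sqcup\cdots\sqcup S_k\subseteq X_{y_k}$. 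So this missing $k$-set lies inside the clique $X_{y_k}$, a contradiction. Hence every node has depth at most $k-1$.

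\emph{Degree.} Fix a node $x_i$. Its degree is the number of its children, plus one for its parent when $i>0$. Distinct children $w$ of $x_i$ have distinct traces $X_w\cap B_{x_i}$; this is exactly how the insertion rule creates a new child. Moreover, the trace of the later-inserted child is what is recorded as that child's signature. So the number of children is at most the number of subsets of $B_{x_i}$, and we have to exclude at least one subset to leave room for the parent edge.

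The base estimate is $|B_{x_i}|\le |D_i|\le C+i$. It holds because $B_{x_i}\cap X_i=\varnothing$: for the root, $B_{x_0}=D_0$; for $i>0$, $B_{x_i}=A_u\setminus X_i\subseteq D_i$. By \eqref{eq:complement-bound} this gives at most $2^{C+i}$ children.

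To fit the parent edge for $i>0$, we show the empty trace is impossible. A child $w$ with $X_w\cap B_{x_i}=\varnothing$ would have empty signature, which contradicts the nonemptiness of signatures. So there are at most $2^{C+i}-1$ children, and $\deg_T(x_i)\le 2^{C+i}$. For the root there is no parent edge, so the count $2^{C}$ suffices directly.

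The main point needing care is that nonemptiness of signatures applies to every child trace. A child $w$ of $x_i$ lies on the root path through $x_i$ and $w$, and its signature there is precisely $X_w\cap B_{x_i}$ by \eqref{eq:signatures}. Therefore the argument given after \eqref{eq:path-partition} applies to it, and the proof is complete.
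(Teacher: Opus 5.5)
Your proof is correct and follows essentially the same route as the paper. For depth, you apply Lemma~\ref{lem:missing-edge} to a vertex of the nonempty signature $S_k$ and observe that the resulting missing $k$-set lies in $X_{y_k}$; for degree, you bound the number of children by the distinct nonempty traces in $B_{x_i}\subseteq D_i$ and then add the parent edge, using \eqref{eq:complement-bound}.
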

	
	\begin{proof}
		Suppose that a root path $y_0,\ldots,y_k$ existed. Choose $x\in S_k$, which is possible because signatures are nonempty. Since $S_k\subseteq B_{y_{k-1}}$, Lemma~\ref{lem:missing-edge} gives a missing edge $\{x,s_1,\ldots,s_{k-1}\}$ with $s_i\in S_i$. All these vertices belong to $X_{y_k}$ by \eqref{eq:path-partition}, a contradiction.
		
		For each node $u$, its children have distinct nonempty intersections with $B_u$. Thus the root has at most $2^{|B_{x_0}|}-1$ children, and a nonroot node $u$ has degree at most $1+(2^{|B_u|}-1)=2^{|B_u|}$. Finally, $B_{x_i}\subseteq D_i$, so \eqref{eq:complement-bound} proves the bound.
	\end{proof}
	
	A node $v$ is a \emph{carrier} of a vertex set $Q$ if $Q\subseteq X_v$. When $Q$ has a carrier among the selected cliques, write $\operatorname{fc}(Q)$ for the carrier of smallest insertion index.
	
	\begin{lemma}[First carriers]
		\label{lem:first-carriers}
		Let $Q=\{p,q_2,\ldots,q_t\}$ consist of distinct vertices, where $p\in D_0$ and $q_2,\ldots,q_t\in X_0$. If $Q$ has a carrier, then
		\[
		\operatorname{depth}(\operatorname{fc}(Q))\le t.
		\]
		If equality holds, then on the root path to $\operatorname{fc}(Q)$ we have $p\in S_1$, and $\{q_2,\ldots,q_t\}$ meets each of $S_2,\ldots,S_t$ in exactly one vertex.
	\end{lemma}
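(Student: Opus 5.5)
The plan is to show that the first carrier forces $Q$ to meet every path signature. Let $v=\operatorname{fc}(Q)$ and let $y_0,\ldots,y_d$ with $y_d=v$ be the root path to $v$. If $d=0$ there is nothing to prove, so assume $d\ge1$. Every ancestor $y_j$ with $j<d$ has smaller insertion index than $v$, because parents precede children in the insertion tree. By the minimality in the definition of $\operatorname{fc}$, no such $y_j$ is a carrier of $Q$. Hence $Q\not\subseteq X_{y_j}$ for all $j<d$.

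The key step is the inclusion
\[
X_{y_d}\setminus S_i\subseteq X_{y_{i-1}}\qquad(1\le i\le d),
\]
which is the same containment established inside the proof of Lemma~\ref{lem:missing-edge}. That argument did not use the path length $k-1$, and I would repeat it for a general $d$, using \eqref{eq:path-partition}:
\begin{itemize}
\item $A_{y_d}\subseteq A_{y_{i-1}}\subseteq X_{y_{i-1}}$;
\item for $j<i$, $S_j\subseteq X_{y_{i-1}}$ by \eqref{eq:signature-persistence};
\item for $j>i$, $S_j\subseteq B_{y_{j-1}}\subseteq A_{y_{j-2}}\subseteq A_{y_{i-1}}$.
\end{itemize}
Now suppose $Q\cap S_i=\varnothing$ for some $1\le i\le d$. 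Since $Q\subseteq X_v=X_{y_d}$, we get $Q\subseteq X_{y_d}\setminus S_i\subseteq X_{y_{i-1}}$. Then $y_{i-1}$ is an earlier carrier, a contradiction. So $Q$ meets each of $S_1,\ldots,S_d$. These signatures are pairwise disjoint, so $d\le|Q|=t$.

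In the equality case $d=t$, the $t$ disjoint sets $S_1,\ldots,S_t$ each contain at least one of the $t$ elements of $Q$. By counting, each contains exactly one element of $Q$, and every element of $Q$ lies in some $S_i$. By \eqref{eq:signature-location}, $S_1\subseteq D_0$ while $S_i\subseteq X_0$ for $i\ge2$. The only element of $Q$ in $D_0$ is $p$, and the $q$'s lie in $X_0$. Hence $S_1\cap Q=\{p\}$, and $\{q_2,\ldots,q_t\}$ meets each of $S_2,\ldots,S_t$ in exactly one vertex.

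I do not expect a real obstacle. The only care needed is to check that the containment from Lemma~\ref{lem:missing-edge} holds for a path of arbitrary length $d$, and that ancestors have smaller insertion index.
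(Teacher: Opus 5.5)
Your proof is correct and is essentially the paper's argument. The paper tracks each $q_j$ to the unique ancestor $X_{y_{h-1}}$ from which it is absent, and counts that each of $y_1,\ldots,y_{d-1}$ must omit a different $q_j$. That is your containment $X_{y_d}\setminus S_i\subseteq X_{y_{i-1}}$ read from the other side. The only cosmetic difference is that the paper first places $p$ in $S_1$ and counts over the nonroot ancestors, whereas you treat the root uniformly and recover $p\in S_1$ from \eqref{eq:signature-location} in the equality case.
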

	
	\begin{proof}
		Let $y_0,\ldots,y_d=v$ be the root path to $v=\operatorname{fc}(Q)$. Since $p\notin X_0$, we have $d\ge1$. By \eqref{eq:path-partition} and \eqref{eq:signature-location}, the membership $p\in X_v$ forces $p\in S_1$. Thus $p$ belongs to all of $X_{y_1},\ldots,X_{y_d}$.
		
		Consider a vertex $q\in Q\setminus\{p\}$. If $q\in A_{y_d}$, then $q$ belongs to every clique on the path. Otherwise the path decomposition places $q$ in exactly one $S_h$, with $h\ge2$. In this case
		\[
		q\in B_{y_{h-1}}\subseteq A_{y_{h-2}},
		\]
		so $q$ belongs to every clique up to $X_{y_{h-2}}$, is absent from $X_{y_{h-1}}$, and belongs to all subsequent cliques by persistence. Hence each of $q_2,\ldots,q_t$ is absent from at most one clique among $X_{y_1},\ldots,X_{y_{d-1}}$.
		
		Every ancestor of $v$ has an earlier insertion index. Therefore none of $y_1,\ldots,y_{d-1}$ carries $Q$. Each already contains $p$, so each must omit at least one of the remaining $t-1$ vertices. It follows that $d-1\le t-1$, proving $d\le t$. If $d=t$, each of the $t-1$ cliques $X_{y_1},\ldots,X_{y_{d-1}}$
		misses a different one of these vertices. The vertex absent at $y_{h-1}$ belongs to $S_h$. Thus the equality statement follows as well.
	\end{proof}
	
	Call nodes of depths $1,\ldots,k-2$ \emph{structural nodes}. Nodes of depth $k-1$ are leaves by Lemma~\ref{lem:depth-degree}. For $1\le t\le k-2$, a depth-$t$ node $u$ is \emph{$t$-critical} if there is a choice
	\[
	(s_1,\ldots,s_t)\in S_1(u)\times\cdots\times S_t(u)
	\quad\text{such that}\quad u=\operatorname{fc}(\{s_1,\ldots,s_t\}).
	\]
	Define the set of \emph{level-$t$ resets} by
	\begin{equation}
		R_t=\{u:1\le\operatorname{depth}(u)<t\}\cup\{u:u\text{ is $t$-critical}\}. \label{eq:resets}
	\end{equation}
	Then $R_1\subseteq R_2\subseteq\cdots\subseteq R_{k-2}$. Every $1$-critical node is the first carrier of some vertex in $D_0$. Different such nodes must use different vertices, and therefore
	\begin{equation}
		|R_1|\le|D_0|\le C. \label{eq:first-resets}
	\end{equation}
	We will repeatedly use the following consequence of Lemma~\ref{lem:first-carriers}: the first carrier of any transversal $t$-tuple lies in $R_t$. Here the tuple may be selected on a different root path from the path to its first carrier. Its first vertex lies in $D_0$ and all its other vertices lie in $X_0$, so Lemma~\ref{lem:first-carriers} applies. If its first carrier has depth less than $t$, it lies in the first set in \eqref{eq:resets}; if its depth is $t$, the equality statement makes it $t$-critical on its own root path.
	
	List the structural nodes as $w_1,\ldots,w_s$ in increasing insertion order. For a node $u$ of depth $k-2$, let $\ell(u)$ be its number of children; all these children have depth $k-1$. After processing the first $j$ structural nodes, define
	\begin{equation}
		U_j=1+j+\sum_{\substack{u\in\{w_1,\ldots,w_j\}\\ \operatorname{depth}(u)=k-2}}\ell(u),\qquad 0\le j\le s. \label{eq:virtual-count}
	\end{equation}
	Thus a deepest structural node and all its leaf children are counted when that structural node is processed, even if some of those children have much later insertion indices.
	
	\begin{lemma}[Virtual counts]
		\label{lem:virtual-count}
		We have $U_0=1$, $U_s=r$, and $U_j\le r$ for every $j$. If $w_{j+1}=x_i$, then $i\le U_j$.
	\end{lemma}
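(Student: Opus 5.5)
The first three claims are bookkeeping, and I will handle them together. By definition, $U_0=1$. Every node of the insertion tree is exactly one of three kinds: the root, a structural node (depth $1,\dots,k-2$), or a node of depth $k-1$. Nodes of depth $k-1$ are leaves by Lemma~\ref{lem:depth-degree}, and each has a parent of depth $k-2$, which is structural. Hence
\[
r=1+s+\sum_{\operatorname{depth}(u)=k-2}\ell(u)=U_s .
\]
Since $U_j$ is nondecreasing in $j$, we also get $U_j\le U_s=r$. When $k=3$ the structural nodes are exactly the depth-$1$ nodes, and the same count applies.

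For the main claim, let $w_{j+1}=x_i$. The plan is to show that every node $x_{i'}$ with $i'<i$ is counted in $U_j$, that is, it is the root, one of $w_1,\dots,w_j$, or a leaf child of one of these with depth $k-2$. Then $i\le U_j$, since there are exactly $i$ indices $0,\dots,i-1$. Take $i'<i$ with $i'\neq 0$.

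First suppose $x_{i'}$ is structural. Its index is smaller than that of $w_{j+1}$, and the $w$'s are listed in increasing insertion order. So $x_{i'}$ is among $w_1,\dots,w_j$.

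Now suppose $x_{i'}$ has depth $k-1$. Its parent $u$ is structural of depth $k-2$, and parents precede children in insertion order. So the index of $u$ is less than $i'<i$, and hence $u\in\{w_1,\dots,w_j\}$. Then $x_{i'}$ is among the $\ell(u)$ children counted in $U_j$.

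Distinct nodes are counted distinctly: each leaf has a unique parent, and leaves are not structural. So the injection from $\{x_0,\dots,x_{i-1}\}$ into the counted set gives $i\le U_j$.

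There is no real obstacle here. The only point needing care is that children of depth-$(k-2)$ nodes are exactly the depth-$(k-1)$ nodes. It is also worth noting that the count can exceed $i$, because leaf children with later indices are included in $U_j$; this is harmless, since the bound only goes one way.
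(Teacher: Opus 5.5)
Your proof is correct and follows essentially the same route as the paper. Both proofs partition the nodes into the root, the structural nodes and the depth-$(k-1)$ leaves to get $U_s=r$. Both then show that every node with insertion index below $i$ is already virtually counted in $U_j$, because structural nodes are listed in insertion order and parents precede their children.
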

	
	\begin{proof}
		Every node is either the root, a structural node, or a depth-$(k-1)$ leaf. A leaf has exactly one deepest structural parent. Thus all terms in \eqref{eq:virtual-count} count disjoint sets of nodes, and the final count includes every node exactly once.
		
		Consider a node whose insertion index is smaller than $i$. If it is structural, it has already been processed. If it is a depth-$(k-1)$ leaf, its parent has an even smaller insertion index and has already been processed, so this leaf is already counted virtually. The root is also counted. Therefore all $i$ nodes preceding $x_i$ are among the $U_j$ virtually counted nodes.
	\end{proof}
	
	A \emph{level-$t$ quiet segment} begins at any processing boundary, and ends immediately after the next processed node in $R_t$, including that node. If there is no such subsequent node, it ends at the final boundary. In particular, the open interior of a quiet segment contains no level-$t$ reset. Only nonempty segments will need to be considered. When there is no endpoint reset, all remaining processed nodes belong to the part before the reset of the segment.
	
	\begin{lemma}[Counting leaf children]
		\label{lem:leaf-count}
		Let $u$ be a depth-$(k-2)$ node. For each leaf child $v$ of $u$, put $P_v=X_v\cap B_u$. For every transversal prefix $\mathbf{s}=(s_1,\ldots,s_{k-2})$ on the path to $u$, choose a carrier $\lambda(\mathbf{s})$. Suppose that a set $E\subseteq V(H)$ contains every $D_{\lambda(\mathbf{s})}$, and write $e=|E|$. If two leaf children $v,v'$ satisfy $P_v\cap E=P_{v'}\cap E$, then
		\[
		B_v\setminus E=B_{v'}\setminus E.
		\]
		Thus,
		\begin{equation}
			\ell(u)\le2^e\bigl(|B_u|+e+1\bigr). \label{eq:leaf-count}
		\end{equation}
	\end{lemma}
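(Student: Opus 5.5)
The plan is to prove the displayed implication first, and then deduce \eqref{eq:leaf-count} by counting leaf children class by class.

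\emph{Set-up.} Fix the root path $y_0,\ldots,y_{k-2}=u$ with signatures $S_1,\ldots,S_{k-2}$. A leaf child $v$ of $u$ extends this path, and its last signature is $S_{k-1}(v)=X_v\cap B_u=P_v$. By \eqref{eq:insertion-sets}, $B_v=A_u\setminus X_v$, and by \eqref{eq:path-partition},
\[
X_v=(A_u\cap X_v)\sqcup S_1\sqcup\cdots\sqcup S_{k-2}\sqcup P_v .
\]

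\emph{The implication.} Suppose $P_v\cap E=P_{v'}\cap E$ and take $x\in B_v\setminus E$. Apply Lemma~\ref{lem:missing-edge} to the path $y_0,\ldots,y_{k-2},v$. It gives a non-edge $\{x,s_1,\ldots,s_{k-1}\}$ with $s_i\in S_i$ for $i\le k-2$ and $s_{k-1}\in P_v$. Let $\lambda=\lambda(s_1,\ldots,s_{k-2})$ be the chosen carrier of this transversal prefix.
\begin{itemize}
\item Since $x\notin E\supseteq D_\lambda$, we have $x\in X_\lambda$. Also $s_1,\ldots,s_{k-2}\in X_\lambda$.
\item $X_\lambda$ is complete and the $k$-set is a non-edge, so $s_{k-1}\notin X_\lambda$. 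Hence $s_{k-1}\in D_\lambda\subseteq E$, so $s_{k-1}\in P_v\cap E=P_{v'}\cap E\subseteq X_{v'}$.
\item Each $s_i$ with $i\le k-2$ lies in $X_{v'}$ by \eqref{eq:path-partition} for the path to $v'$.
\end{itemize}
Completeness of $X_{v'}$ now forces $x\notin X_{v'}$. As $x\in A_u$, this gives $x\in A_u\setminus X_{v'}=B_{v'}$. By symmetry, $B_v\setminus E=B_{v'}\setminus E$.

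\emph{The count.} Partition the leaf children of $u$ according to $P_v\cap E$; there are at most $2^e$ classes. It remains to show that one class has at most $|B_u|+e+1$ members.

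Within a class, the part of $X_v$ lying in $A_u\setminus E$ equals $A_u\setminus(E\cup B_v)$, which is constant, and $P_v\cap E$ is constant. So $v$ is determined by the trace
\[
T_v=X_v\cap\bigl((B_u\setminus E)\cup(A_u\cap E)\bigr),
\]
a subset of a ground set with at most $|B_u|+e$ elements. The goal is to show that, within one class, the traces $T_v$ form a chain under inclusion. A chain of distinct subsets has at most $|B_u|+e+1$ members, which gives \eqref{eq:leaf-count}.

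\emph{Main obstacle: the chain property.} I expect this step to be the hard part. The first attempt will go as follows. Suppose $a\in T_v\setminus T_{v'}$ and $b\in T_{v'}\setminus T_v$. Rerun the argument above, applying Lemma~\ref{lem:missing-edge} to $a$ relative to $v'$ and to $b$ relative to $v$, and route the resulting non-edges through the carriers $\lambda$ so that the vertices outside $E$ land in a common clique, producing a contradiction.

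If incomparable traces cannot be ruled out directly, the fallback is different. One can note that distinct leaf children have distinct $P_v$ (they are distinct children of $u$) and try to obtain the linear bound from an injection instead. Map each $v$ to a single element of the trace ground set (say its least element), or to a default value when the trace is empty. That map has at most $|B_u|+e+1$ possible values, which is exactly the factor in \eqref{eq:leaf-count}.
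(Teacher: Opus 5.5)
Your proof of the first assertion is correct and matches the paper's argument: missing edge at $v$, routing through the carrier $\lambda(\mathbf{s})$ to force $s_{k-1}\in E$, then completeness of $X_{v'}$. The paper phrases it as an exact description of $B_v\setminus E$ in terms of $u$ and $\sigma=P_v\cap E$, but the steps are the same.

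The gap is in the count. The chain property you aim for is false. Your fallback also fails: sending $v$ to the least element of $T_v$ is not injective, and nothing you prove rules out many traces with a common least element. For a concrete failure, take $k=3$, $W=\{w_1,w_2\}$, $W'=\{w_1',\dots,w_4'\}$. Let the edges be the triples contained in one of $X_0=\{a,b_1,b_2\}\cup W\cup W'$, $X_u=\{s\}\cup W\cup W'$, $X_v=\{s,a\}\cup W'$, $X_{v'}=\{s,b_1,b_2\}\cup W$, $X_\lambda=\{s,a,b_1,b_2\}$. These five sets are exactly the maximal cliques, with sizes $9,7,6,5,4$. In the insertion tree, $u$ is the only child of the root and $v,v',\lambda$ are its leaf children. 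Take $\lambda(s)=\lambda$ and $E=D_\lambda=W\cup W'$. Then all three leaves lie in the class $P\cap E=\varnothing$. Yet their traces $\{a\}\cup W'$, $\{b_1,b_2\}\cup W$ and $\{a,b_1,b_2\}$ are pairwise incomparable. Under an order starting with $a$, two of them also share the same least element.

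The missing idea is that the selected cliques have pairwise distinct sizes, which your proposal never uses. In your own notation, within a class, $X_v$ is the disjoint union of the class-constant set $\bigl((A_u\setminus E)\setminus(B_v\setminus E)\bigr)\cup S_1\cup\cdots\cup S_{k-2}\cup(P_v\cap E)$ and the trace $T_v$. So $|X_v|$ is a class constant plus $|T_v|$. Distinct leaf children in one class therefore have distinct values $|T_v|\in\{0,1,\dots,|B_u|+e\}$. Hence a class has at most $|B_u|+e+1$ members, and $\ell(u)\le 2^e(|B_u|+e+1)$.

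This is exactly the paper's count. There it is written as $|D_v|=c_u+|B_u\setminus P_v|+|B_v\setminus E|+|B_v\cap E|$ with $c_u=|D_u\setminus B_u|$. The first and third summands are fixed within a class, and the other two together take values in $[0,|B_u|+e]$.
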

	
	\begin{proof}
		Fix a child $v$, and write $\sigma=P_v\cap E$. If $x\in B_v\setminus E$, Lemma~\ref{lem:missing-edge} applied to the root path ending at $v$ gives a transversal prefix $\mathbf{s}=(s_1,\ldots,s_{k-2})$ and $z\in P_v$ such that
		\begin{equation}
			\{x,s_1,\ldots,s_{k-2},z\}\notin E(H). \label{eq:missing-edge}
		\end{equation}
		The chosen carrier $\lambda(\mathbf{s})$ contains every $s_i$. It also contains $x$, because $x\notin E\supseteq D_{\lambda(\mathbf{s})}$. Therefore it cannot contain $z$, and hence $z\in D_{\lambda(\mathbf{s})}\subseteq E$. Thus $z\in\sigma$.
		
		Conversely, suppose $x\in A_u\setminus E$ and that \eqref{eq:missing-edge} holds for some transversal prefix $\mathbf{s}$ and some $z\in\sigma$. The vertices $s_1,\ldots,s_{k-2},z$ all belong to $X_v$. Since $X_v$ is a clique, it follows that $x\notin X_v$, and therefore $x\in A_u\setminus X_v=B_v$. We have proved the exact description
		\[
		B_v\setminus E=\left\{x\in A_u\setminus E:
		\begin{array}{l}
			\text{there exist a transversal prefix $\mathbf{s}$ and $z\in\sigma$}\\
			\text{for which \eqref{eq:missing-edge} holds}
		\end{array}\right\}.
		\]
		The right-hand side depends only on $u$ and $\sigma$, proving the first assertion.
		
		It remains to count possible clique sizes. The path identities give
		\[
		D_v=(D_u\setminus P_v)\sqcup B_v.
		\]
		Since $P_v\subseteq B_u\subseteq D_u$, put $c_u=|D_u\setminus B_u|$ to obtain
		\begin{equation}
			|D_v|=c_u+|B_u\setminus P_v|+|B_v\setminus E|+|B_v\cap E|. \label{eq:complement-count}
		\end{equation}
		For a fixed pattern $\sigma$, the term $|B_v\setminus E|$ is fixed. The sum of the other two variable terms lies between $0$ and $|B_u|+e$, so $|D_v|$ has at most $|B_u|+e+1$ possible values. There are at most $2^e$ patterns. Since the selected cliques have pairwise distinct sizes, their complement sizes are also pairwise distinct. This proves \eqref{eq:leaf-count}.
	\end{proof}
	
	All constants in the remainder of the section may depend on $k$. For a processing boundary
	with virtual count $T$, define
	\begin{equation}
		E_T=\bigcup_{0\le i\le\min\{T,r-1\}}D_i.
		\label{eq:early-complements}
	\end{equation}
	If $T\ge C$ and $T\ge1$, then \eqref{eq:complement-bound} yields
	\begin{equation}
		|E_T|\le(T+1)(C+T)\le4T^2.
		\label{eq:early-complement-bound}
	\end{equation}
	By Lemma~\ref{lem:virtual-count}, every structural node processed by this boundary has insertion index at most
	$T$.
	
	\begin{lemma}[Deepest quiet segments]
		\label{lem:deepest-segment}
		For every fixed $k\ge3$, there are constants $K_k$ and $T_k$ such that the following holds.
		If a level-$(k-2)$ quiet segment starts with virtual count $T\ge\max\{C,T_k\}$ and ends with
		count $U'$, then
		\[
		L_3(U')\le L_3(T)+K_k.
		\]
	\end{lemma}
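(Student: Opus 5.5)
The plan is to show that a level-$(k-2)$ quiet segment can raise the virtual count by at most a bounded number of exponentials in a polynomial of $T$, and then to apply the stability estimates of Lemma~\ref{lem:ranks}. Write the segment as the processing of $w_{j_0+1},\dots,w_{j_1}$, where $U_{j_0}=T$ and $U_{j_1}=U'$. Only $w_{j_1}$ may lie in $R_{k-2}$. Every node of depth less than $k-2$ lies in $R_{k-2}$, so every interior node $u$ has depth exactly $k-2$ and is not $(k-2)$-critical.

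The key observation concerns such an interior $u$. For each transversal prefix $\mathbf{s}$ on the path to $u$, the node $u$ carries $\mathbf{s}$, so $\operatorname{fc}(\mathbf{s})$ exists and $\operatorname{fc}(\mathbf{s})\neq u$ because $u$ is not critical. By the consequence of Lemma~\ref{lem:first-carriers} stated after \eqref{eq:resets}, the node $\operatorname{fc}(\mathbf{s})$ lies in $R_{k-2}$. Its index is smaller than that of $u$, and it is structural, so it was processed before $u$. It cannot be processed inside the segment, since the interior contains no reset. Hence it was processed before the starting boundary, and by Lemma~\ref{lem:virtual-count} its insertion index is at most $T$. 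I take $\lambda(\mathbf{s})=\operatorname{fc}(\mathbf{s})$ and $E=E_T$ in Lemma~\ref{lem:leaf-count}. Then \eqref{eq:early-complement-bound} gives $e\le4T^2$. If $u=x_i$ is processed at step $j$, then $i\le U_j$ and $|B_u|\le|D_i|\le C+U_j\le2U_j$. Therefore
\[
U_{j+1}\le U_j+1+2^{4T^2}\bigl(2U_j+4T^2+1\bigr)\le2^{5T^2}U_j
\]
for $T\ge T_k$.

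Next I would bound the number $N$ of interior steps. The parent of an interior node has depth $k-3$. If $k\ge4$, this parent lies in $R_{k-2}$ and precedes the segment, so its index is at most $T$. If $k=3$, the parent is the root. In either case there are at most $T+1$ possible parents, each of degree at most $2^{C+T}\le2^{2T}$ by Lemma~\ref{lem:depth-degree}. Hence $N\le(T+1)2^{2T}\le2^{3T}$. Iterating the multiplicative bound gives, just before the endpoint,
\[
U\le T\cdot2^{5T^2\cdot2^{3T}}\le2^{2^{4T}}.
\]

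The endpoint reset $w_{j_1}=x_i$ has $i\le U$. It adds $1$, plus $\ell(w_{j_1})\le2^{C+i}\le2^{2U}$ if it has depth $k-2$. So $U'\le2^{3U}$, which is at most a fixed number of exponentials applied to a polynomial in $T$. The last assertion of Lemma~\ref{lem:ranks} then gives $L_3(U')\le L_3(T)+K_k$. If the segment has no endpoint reset, the same bound holds without the final step. The main obstacle I expect is the justification that every first carrier of a prefix at an interior node precedes the segment. This requires checking that $\operatorname{fc}(\mathbf{s})$ is a structural node rather than a leaf or the root, and that the argument works even though the tuple is read on $u$'s path while its first carrier may sit on another path. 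I would handle both points with the depth bound of Lemma~\ref{lem:first-carriers}: it places $\operatorname{fc}(\mathbf{s})$ at depth between $1$ and $k-2$, because the first entry of $\mathbf{s}$ lies in $D_0$, so the root does not carry $\mathbf{s}$.
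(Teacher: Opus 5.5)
Your proposal is correct and follows essentially the same route as the paper. You show that interior nodes are noncritical depth-$(k-2)$ nodes whose prefix first carriers were processed before the segment, so Lemma~\ref{lem:leaf-count} applies with $E=E_T$; you then bound the number of interior nodes through their already-processed parents, iterate the multiplicative update of the virtual count, handle the endpoint via the degree bound, and conclude by rank stability, with your explicit constants ($2^{5T^2}$, $2^{3T}$, $2^{3U}$) simply instantiating the paper's $b_k$, $a_k$, $c_k$.
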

	
	\begin{proof}
		Every structural node in the open interior of the segment has depth $k-2$ and is
		noncritical. Consider such a node $u$ and a transversal prefix of length $k-2$ on its root
		path. Its first carrier belongs to $R_{k-2}$ by Lemma~\ref{lem:first-carriers}. Since $u$ itself carries the prefix and is
		noncritical, that first carrier has a strictly earlier insertion index than $u$. It cannot occur in
		the open interior of the quiet segment. Thus it was processed before the segment began, and
		its complement is contained in $E_T$. Carrier compression therefore applies at every such $u$
		with the same set $E=E_T$.
		
		We first bound the number $N$ of these interior nodes. If $k\ge4$, their parents have depth
		$k-3$ and belong to $R_{k-2}$. Each such parent was processed before the segment began. There
		are at most $T$ such parents, each with insertion index at most $T$, and hence each has degree
		at most $2^{C+T}$. If $k=3$, every structural node is a child of the root, so directly $N\le2^C$. In
		both cases, after increasing a constant if necessary,
		\begin{equation}
			N\le T2^{C+T}\le2^{a_kT}.
			\label{eq:deepest-node-count}
		\end{equation}
		Suppose that an interior node $u$ is about to be processed, and the current virtual count is
		$U$. Its actual insertion index is at most $U$, so $|B_u|\le C+U$. By \eqref{eq:early-complement-bound} and \eqref{eq:leaf-count},
		\[
		\ell(u)\le2^{4T^2}(C+U+4T^2+1).
		\]
		Since $U\ge T\ge C$, there is a constant $b_k$ such that the new virtual count satisfies
		\begin{equation}
			U_{\mathrm{new}}+1\le2^{b_kT^2}(U+1).
			\label{eq:virtual-update}
		\end{equation}
		Iterating this inequality at most $2^{a_kT}$ times gives, just before an endpoint reset (or at the
		final boundary if there is no reset),
		\[
		U_{\mathrm{pre}}+1\le(T+1)2^{b_kT^2 2^{a_kT}}\le2^{2^{c_kT}}
		\]
		for sufficiently large $T$.
		
		If the endpoint is a shallower structural node, processing it increases the virtual count
		by only $1$. If it is a deepest critical node, its insertion index is at most $U_{\mathrm{pre}}$, and the degree bound gives
		\[
		U'\le U_{\mathrm{pre}}+1+2^{C+U_{\mathrm{pre}}}.
		\]
		Thus in every case $U'$ is bounded by an exponential tower of fixed height with bottom
		parameter a constant multiple of $T$. Rank stability, Lemma~\ref{lem:ranks}, now implies
		$L_3(U')\le L_3(T)+O_k(1)$.
	\end{proof}
	
	\begin{lemma}[Counting resets at the next level]
		\label{lem:next-resets}
		Suppose $1\le t\le k-3$. A level-$t$ quiet segment starting with virtual count
		$T\ge\max\{C,T_k\}$ contains at most $2^{a_kT}$ nodes of $R_{t+1}$, for a suitable constant $a_k$.
	\end{lemma}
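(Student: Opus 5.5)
Fix $t$ with $1\le t\le k-3$, and consider a level-$t$ quiet segment that starts at a boundary with virtual count $T\ge\max\{C,T_k\}$. Apart from possibly its endpoint, every processed node in the segment lies outside $R_t$, so it has depth at least $t$ and is not $t$-critical. A node of $R_{t+1}$ is either a node of depth less than $t+1$ or a $(t+1)$-critical node. Hence every node of $R_{t+1}$ inside the segment, other than the endpoint, is of one of two kinds: a noncritical node of depth exactly $t$, or a $(t+1)$-critical node of depth $t+1$. I will bound each kind separately, and then add $1$ for the endpoint.

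\emph{Nodes of depth $t$.} For $t\ge2$, the parent of such a node has depth $t-1$, so the parent lies in $R_t$. The parent is inserted before the child, so it is processed before the node. It cannot lie in the open interior of the segment, so it was processed before the segment began. There are at most $T$ processed nodes by that boundary, and each has insertion index at most $T$. So each such parent has degree at most $2^{C+T}$, and the number of depth-$t$ nodes in the segment is at most $T2^{C+T}$. For $t=1$ these nodes are children of the root, so there are at most $2^C$ of them. This is the same count as \eqref{eq:deepest-node-count}.

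\emph{Nodes of depth $t+1$.} This is the main step. Each such node $u$ is the first carrier of some transversal $(t+1)$-tuple $(s_1,\ldots,s_{t+1})$ on its own path. The parent $p$ of $u$ has depth $t$, and $p$ carries the prefix $(s_1,\ldots,s_t)$. The first carrier $\operatorname{fc}(s_1,\ldots,s_t)$ lies in $R_t$ by Lemma~\ref{lem:first-carriers}. I would then argue as follows.
\begin{itemize}
\item If $p$ lies in the segment interior, then $p$ is noncritical. So the first carrier of the prefix is strictly earlier than $p$, and therefore it was processed before the segment. Its complement lies in $E_T$.
\item If $p$ precedes the segment, then $p$ is one of at most $T$ early nodes.
\end{itemize}
In the first case, the map $u\mapsto(p,\text{first carrier }\lambda)$ together with the set $\{s_1,\ldots,s_{t+1}\}$ determines $u$. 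I would count through $\lambda$. Since $\lambda$ carries $s_1,\ldots,s_t$ and $\lambda$ was processed early, its index is at most $T$. The vertex $s_{t+1}$ is in $S_{t+1}(u)\subseteq B_p\subseteq D_p$, but I need it to lie in a set of size polynomial in $T$. The fact I would use is this: $u$ is the first carrier of the tuple, while $\lambda$ carries the prefix and comes earlier, so $\lambda$ does not carry the full tuple. Therefore $s_{t+1}\in D_\lambda\subseteq E_T$, with $|E_T|\le 4T^2$ by \eqref{eq:early-complement-bound}. Similarly $s_1,\ldots,s_t\in X_\lambda$, but I do not need to bound them. Distinct critical nodes of depth $t+1$ under a fixed parent $p$ have distinct $S_{t+1}$-signatures, because they are siblings. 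So for fixed $p$, each such $u$ is determined by a nonempty $S_{t+1}(u)$ that meets $E_T$.

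The catch is that $p$ may range over many interior depth-$t$ nodes, up to $2^{a T}$ of them. So I would rather bound the count per parent, by at most $4T^2$ or $2^{4T^2}$, and then multiply by the number of parents. Those parents are either early nodes or the depth-$t$ nodes counted above, which number at most $T+T2^{C+T}$. Making this per-parent bound rigorous is the hard part. Injectivity must hold, which works because a fixed vertex $z\in E_T$ can be the $s_{t+1}$ witness of at most one sibling, since siblings have disjoint $S_{t+1}$'s. I expect this to give at most $4T^2$ critical children per parent.

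Combining the two counts gives at most
\[
1+T2^{C+T}+(T+T2^{C+T})\cdot4T^2\le 2^{a_kT}
\]
nodes of $R_{t+1}$ in the segment, for large $T$ and with $C\le T$. This proves Lemma~\ref{lem:next-resets} after enlarging $a_k$.
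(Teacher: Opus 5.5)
Your decomposition and most of your counting coincide with the paper's proof. Interior $R_{t+1}$ nodes are either noncritical depth-$t$ nodes or $(t+1)$-critical depth-$(t+1)$ nodes, and the former number at most $T2^{C+T}$. The witness coordinate $z=s_{t+1}$ lies in $D_\lambda\subseteq E_T$, because the earlier first carrier $\lambda$ of the prefix cannot carry the whole tuple. Finally, a per-parent bound of $4T^2$ is multiplied by the at most $T+N_t$ possible parents. In your case where $p$ precedes the segment, you should still say why $\lambda$ is early: its index is at most that of $p$. The paper avoids the case split by noting that $\lambda\in R_t$ precedes $v$, so it cannot lie in the open interior.

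The gap is exactly the step you flag as the hard part. Your justification, that siblings have disjoint $S_{t+1}$'s, is false. The insertion procedure only forces the children of $p$ to have \emph{distinct} nonempty intersections with $B_p$. These sets may overlap, which is why the argument of Lemma~\ref{lem:depth-degree} allows up to $2^{|B_p|}-1$ children rather than $|B_p|$. So nothing you wrote prevents two critical siblings from using the same $z$.

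The missing idea is one more application of first-carrier minimality. Suppose siblings $v$ (inserted earlier) and $v'$ both have witness coordinate $z$. Then $z\in S_{t+1}(v)\subseteq X_v$. The path partition \eqref{eq:path-partition} gives $X_v\supseteq S_1(p)\cup\cdots\cup S_t(p)$, and this contains the prefix $s'_1,\ldots,s'_t$ of the witnessing tuple of $v'$. Hence $v$ carries $\{s'_1,\ldots,s'_t,z\}$ and precedes $v'$, contradicting $v'=\operatorname{fc}(\{s'_1,\ldots,s'_t,z\})$. This gives at most $|E_T|\le4T^2$ critical children per parent, after which your final count goes through.

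Your fallback of $2^{4T^2}$ per parent does not rescue the argument. It is not established, since distinct signatures can have the same trace on $E_T$. Even if it were, it would give only $2^{O(T^2)}$ rather than the stated $2^{a_kT}$.
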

	
	\begin{proof}
		In the open interior of the segment there are no nodes of depth less than $t$, and
		no $t$-critical nodes. Therefore a node of $R_{t+1}$ in this interior is either a depth-$t$ node or a
		$(t+1)$-critical depth-$(t+1)$ node. The endpoint can contribute at most one additional reset.
		
		Let $N_t$ be the number of depth-$t$ nodes in the open interior. For $t\ge2$, each has a
		depth-$(t-1)$ parent in $R_t$. That parent must have been processed before the segment began.
		There are at most $T$ possible parents, and their indices are at most $T$. For $t=1$, the parent
		is always the root. Thus in either case
		\begin{equation}
			N_t\le T2^{C+T}\le2^{b_kT}.
			\label{eq:depth-t-count}
		\end{equation}
		Now consider an interior $(t+1)$-critical node $v$ with parent $u$ of depth $t$. Choose a
		witnessing tuple
		\[
		(s_1,\ldots,s_t,z),\qquad v=\operatorname{fc}(\{s_1,\ldots,s_t,z\}).
		\]
		The parent $u$ contains the prefix $\{s_1,\ldots,s_t\}$. Its first carrier $\lambda$ therefore precedes $v$ and
		belongs to $R_t$. Since the segment has no earlier interior $R_t$ node, $\lambda$ was processed before
		the segment began. Hence $D_\lambda\subseteq E_T$. If $z$ belonged to $X_\lambda$, then $\lambda$ would carry the full tuple
		before $v$, a contradiction. Therefore $z\in E_T$.
		
		For a fixed parent $u$, distinct critical children must receive distinct vertices $z$ in these
		choices. Indeed, if two such children received the same $z$, the earlier child would contain that
		vertex and all the parent signatures $S_1(u),\ldots,S_t(u)$. It would therefore carry the witnessing
		tuple of the later child, contradicting the definition of its first carrier. Thus a fixed parent
		has at most $|E_T|\le4T^2$ critical children in the segment.
		
		A relevant depth-$t$ parent was either processed before the segment began or is one of its
		$N_t$ interior depth-$t$ nodes. There are at most $T+N_t$ such parents. Including the possible
		endpoint reset, the total number of $R_{t+1}$ nodes is at most
		\[
		N_t+(T+N_t)4T^2+1\le2^{a_kT},
		\]
		after adjusting $a_k$ and the fixed threshold.
	\end{proof}
	
	\begin{lemma}[Rank bound for quiet segments]
		\label{lem:quiet-rank}
		For every fixed $k\ge3$, there are constants $K_k,T_k$ such that, for each
		$1\le t\le k-2$, a level-$t$ quiet segment beginning with virtual count
		$T\ge\max\{C,T_k\}$ and ending with $U'$ satisfies
		\[
		L_{k-t+1}(U')\le L_{k-t+1}(T)+K_k.
		\]
	\end{lemma}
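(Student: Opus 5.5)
We argue by downward induction on $t$, from $t=k-2$ down to $t=1$, with the index $m=k-t+1$ increasing from $3$ to $k$. The base case $t=k-2$ is exactly Lemma~\ref{lem:deepest-segment}, since $k-(k-2)+1=3$. We will allow the constants $K_k,T_k$ to be enlarged a bounded number of times, once per level. There are only $k-2$ levels, so the final constants still depend only on $k$.

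For the inductive step, fix $1\le t\le k-3$ and assume the claim at level $t+1$, with rank index $m=k-t$. Take a level-$t$ quiet segment that starts at virtual count $T$ and ends at $U'$. We split it at its processed nodes of $R_{t+1}$. Each such node closes a level-$(t+1)$ quiet segment, and the next segment starts at the following boundary. Since $R_t\subseteq R_{t+1}$, the level-$t$ segment is the concatenation of consecutive level-$(t+1)$ quiet segments, the last possibly ending at the final boundary with no reset. Write their boundary counts as $T=U_0\le U_1\le\cdots\le U_N=U'$. Virtual counts are nondecreasing, so these counts are monotone.

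By Lemma~\ref{lem:next-resets}, the number of $R_{t+1}$ nodes in the segment is at most $2^{a_kT}$. Hence $N\le2^{a_kT}+1\le2^{A T}$. Each sub-segment starts at a count $U_i\ge T\ge\max\{C,T_k\}$, so the induction hypothesis gives
\[
L_{m}(U_{i+1})\le L_m(U_i)+K_k .
\]
Lemma~\ref{lem:combine-ranks} now applies with $m=k-t\ge3$, $A$ and $K_0=K_k$. It yields $L_{m+1}(U')\le L_{m+1}(T)+O_k(1)$ for all $T$ beyond some threshold, and $m+1=k-t+1$. We then enlarge $T_k$ to that threshold and $K_k$ to the resulting constant.

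The main point needing care is the decomposition. We must check that every level-$(t+1)$ sub-segment is a genuine quiet segment in the sense of the definition, with no interior $R_{t+1}$ node. We must also check that its starting count is at least $T$, which holds because counts only increase. Finally, the constant bookkeeping must not be circular: $T_k$ and $K_k$ used at level $t+1$ must be fixed before they are enlarged for level $t$. This is handled by defining them level by level, as $K^{(t)}$ and $T^{(t)}$, and taking maxima at the end. Monotonicity of $T_k$ is harmless, because the hypotheses at level $t+1$ only require a larger starting threshold.
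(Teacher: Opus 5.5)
Your proof is correct and follows the paper's argument essentially step for step: downward induction on $t$ with Lemma~\ref{lem:deepest-segment} as the base case, cutting the level-$t$ segment after each $R_{t+1}$ node (using $R_t\subseteq R_{t+1}$), bounding the number of pieces by Lemma~\ref{lem:next-resets}, and aggregating the per-piece bounds with Lemma~\ref{lem:combine-ranks}. Your level-by-level constants $K^{(t)},T^{(t)}$ only spell out what the paper covers by noting there are $k-2$ levels, and your boundary counts would best be renamed (the paper uses $V_i$) to avoid clashing with the virtual counts $U_j$ of \eqref{eq:virtual-count}.
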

	
	\begin{proof}
		We use downward induction on $t$. The case $t=k-2$ is Lemma~\ref{lem:deepest-segment}. When $k=3$, this
		is the only case.
		
		Let $1\le t\le k-3$, and assume the claim at level $t+1$. Cut a level-$t$ quiet segment
		immediately after each of its $R_{t+1}$ nodes. Each nonempty piece is a level-$(t+1)$ quiet segment:
		the level-$t$ endpoint, if it is a reset, also lies in $R_{t+1}$ because $R_t\subseteq R_{t+1}$; a final piece without
		a reset reaches the final processing boundary. By Lemma~\ref{lem:next-resets}, the number $N$ of pieces is at
		most $2^{a_kT}+1$, and hence at most $2^{a'_kT}$ after changing the constant.
		
		Write their consecutive endpoint counts as
		\[
		T=V_0\le V_1\le\cdots\le V_N=U'.
		\]
		All starting counts are at least $T$, so the induction hypothesis applies to every piece. Put
		$m=k-t\ge3$. It gives
		\[
		L_m(V_{i+1})\le L_m(V_i)+K_k\qquad(0\le i<N).
		\]
		Rank aggregation, Lemma~\ref{lem:combine-ranks}, yields
		\[
		L_{m+1}(U')\le L_{m+1}(T)+O_k(1).
		\]
		Since $m+1=k-t+1$, this is the required statement. There are only $k-2$ levels, so the
		constants and thresholds may be enlarged to work at every level at the same time.
	\end{proof}
	
	\section{Proof of the main theorem}
	\label{sec:proof}
	
	\begin{proof}[Proof of Theorem~\ref{thm:main}]
		We first prove \(f(n,k)=O_k(L_k(n))\) by induction on \(k\).
		Lemma~\ref{lem:graphs} gives \(f(n,2)=O(L_2(n))\), which starts the induction.
		For \(k\ge3\), assume the upper bound for \(k-1\). We can then choose
		\(A\) so that \eqref{eq:induction-assumption} holds.
		Lemma~\ref{lem:tails} gives an admissible \(n\)-tail with
		\(\rho(n)\le2L_k(n)+R\) components for every sufficiently large \(n\).
		By Lemma~\ref{lem:selectors},
		\[
		f(n,k)\le k\rho(n)-1\le2kL_k(n)+kR-1.
		\]
		Since \(L_k(n)\) tends to infinity, this is the required upper bound.
		It also proves the induction step when \(k=3\).
		
		We next prove the lower bound. Let $H$ be any $n$-vertex $k$-uniform
		hypergraph with $r=n-C$ distinct maximal-clique sizes. Choose the cliques
		and form the insertion tree as in Section~\ref{sec:lemmas}.
		We will show that $L_k(n)=O_k(C)$. Increase the threshold in Lemma~\ref{lem:quiet-rank} so that $T_k\ge2$, and put
		\[
		T_0=\max\{C,T_k\}.
		\]
		If the final virtual count is less than $T_0$, then $r<T_0$ and $L_k(r)=O_k(C)$, since $C\ge1$.
		
		Otherwise let $U^*$ be the first virtual count at least $T_0$, and let $U^-<T_0$ be its predecessor.
		The node processed at this step has insertion index at most $U^-$ by Lemma~\ref{lem:virtual-count}. If it is
		shallower than depth $k-2$, the count increases by $1$. If it has depth $k-2$, its number of
		children is at most $2^{C+U^-}$. Therefore, in either case,
		\[
		U^*\le T_0+1+2^{C+T_0}\le2^{O_k(T_0)}.
		\]
		By rank stability and $T_0=O_k(C)$,
		\begin{equation}
			L_k(U^*)\le L_k(T_0)+O_k(1)=O_k(C).
			\label{eq:initial-rank}
		\end{equation}
		Starting at this boundary, partition the rest of the structural process into level-$1$ quiet
		segments. By \eqref{eq:first-resets}, there are at most $C+1$ nonempty segments. Each begins with a count
		at least $T_0$, so Lemma~\ref{lem:quiet-rank} with $t=1$ shows that each increases $L_k$ by at most a constant
		depending on $k$. Since the final virtual count is $r$, we conclude that
		\[
		L_k(r)\le L_k(U^*)+O_k(C+1)=O_k(C).
		\]
		This also holds in the case $r<T_0$ already considered.
		
		If $C>n/2$, then $L_k(n)=O_k(C)$ follows immediately from $L_k(n)=o(n)$. Otherwise
		$n\le2r$, and monotonicity and rank stability give
		\[
		L_k(n)\le L_k(2r)\le L_k(r)+O_k(1)=O_k(C).
		\]
		Choose $H$ with $g(n,k)$ distinct maximal-clique sizes. Then
		$C=n-g(n,k)=f(n,k)$, so $f(n,k)=\Omega_k(L_k(n))$.
		Together with the upper bound, this gives $f(n,k)=\Theta_k(L_k(n))$.
		
		It remains to estimate $c(n,k)$. Write \(C=c(n,k)\). If \(C>n/2\), then
		\(C\ge\log_2 L_k(n)-O_k(1)\) immediately. Otherwise
		\[
		n\le2(n-C)\le2N_0(k-1,C).
		\]
		Lemma~\ref{lem:tree-size} and rank stability give \(L_k(n)=O_k(2^C)\), so
		\(C\ge\log_2 L_k(n)-O_k(1)\).
		The bound in Lemma~\ref{lem:tree-size} may be extended to the finitely many small
		values of \(C\) by increasing its constant.
		
		Conversely, put
		\[
		C'=\lceil\log_2 L_k(n)\rceil+A_k,
		\]
		where the fixed integer \(A_k\) is sufficiently large. The lower estimate
		in Lemma~\ref{lem:tree-size} gives
		\(L_k(N_0(k-1,C'))>L_k(n)\).
		Since \(L_k\) is nondecreasing, this implies \(N_0(k-1,C')>n\). Thus
		\(n-C'\le N_0(k-1,C')\) and \(c(n,k)\le C'\).
		
		We have proved \(c(n,k)=\log_2 L_k(n)+O_k(1)\). Taking powers of \(2\)
		gives
		\[
		2^{c(n,k)}=\Theta_k(L_k(n)).
		\]
		Since \(f(n,k)=\Theta_k(L_k(n))\), we also have
		\(f(n,k)=\Theta_k(2^{c(n,k)})\).
	\end{proof}

\end{document}